\newtheorem{Lemma}{Lemma}[section]
\newtheorem{Theorem}[Lemma]{Theorem}
\newtheorem{Proposition}[Lemma]{Proposition}
\theoremstyle{definition}
\newtheorem{Definition}[Lemma]{Definition}
\theoremstyle{remark}
\newtheorem{Remark}[Lemma]{Remark}
\numberwithin{equation}{section}
\newcommand{\df}[1]{\emph{#1}}
\newcommand{\RS}{\mathcal{R}}
\DeclareMathOperator{\Aut}{Aut}
\newcommand{\coord}{\Upsilon}
\newcommand{\GRS}{generalized root system\xspace}
\newcommand{\GRSs}{generalized root systems\xspace}
\newcommand{\WG}{\mathcal{W}}
\newcommand{\redGRS}{reduced root set\xspace}
\newcommand{\redGRSs}{reduced root sets\xspace}
\title[A classification of generalized root systems]
{A classification of generalized root systems}
\author{M.~Cuntz}
\address{Michael Cuntz,
Institut f\"ur Algebra, Zahlentheorie und Diskrete Mathematik,
Fakult\"at f\"ur Mathematik und Physik,
Leibniz Universit\"at Hannover,
Welfengarten 1,
D-30167 Hannover, Germany}
\email{cuntz@math.uni-hannover.de}
\author{B.~M\"uhlherr}
\address{Bernhard M\"uhlherr,
Mathematisches Institut, Arndtstra{\ss}e 2, 35392 Gie{\ss}en, Germany}
\email{bernhard.muehlherr@math.uni-giessen.de}
\subjclass[2020]{17B22, 52C35, 16T30}
\begin{document}

\begin{abstract}
Dimitrov and Fioresi introduced an object that they call a generalized root system. This is a finite set of vectors in a euclidean space satisfying certain compatibilities between angles and sums and differences of elements. They conjecture that every generalized root system is equivalent to one associated to a restriction of a Weyl arrangement. In this note we prove the conjecture and provide a complete classification of generalized root systems up to equivalence.
\end{abstract}

\maketitle

\section{Introduction}

A \emph{\GRS} as introduced by Dimitrov and Fioresi \cite{DF23} is a finite set of vectors in a euclidean vector space which is closed under sums or differences of elements depending on the angle between the vectors (see Definition \ref{def:GRS}). Although the definition is very short, it generalizes the concept of a classic root system (i.e.\ a root system in the sense of \cite[\S 9.2]{Hu72}) in a very nice way.

At first sight, the definition of a \GRS looks much more general than the definition of a classic root system because it does not require any symmetries. However, 
in Sections 1 and 2 of \cite{DF23} it is observed that \GRS share many
fundamental properties of classic root systems. A central notion in the context of \GRS is that of a \emph{quotient} (cf.\ Def.\ \ref{def:quotient}).
The key observation is that a quotient of a \GRS is again a \GRS. In particular, the quotient of a classic root system is a \GRS which is not classic in the generic case. In fact, one may interpret the quotients of classic root systems as combinatorial generalizations of the relative root systems known form the theory of algebraic groups. 
In the language of arrangements of hyperplanes, a quotient of a root system is a restriction of a Weyl arrangement to an element of its intersection lattice.

In Section 5 of \cite{DF23} the irreducible \GRSs of rank 2 are classified and it turns out that each such \GRS is equivalent to a rank 2 quotient of a classic root system.
In view of this, it is conjectured in \cite{DF23} that each \GRS is equivalent to a quotient of a classic root system (Conjecture 5.8 in \cite{DF23}). In this note we prove this conjecture for all \GRSs that do not have an irreducible direct factor of rank 1. Here is our main result (see Subsection \ref{proof:main} for a proof).

\begin{Theorem} \label{mainresult}
Each irreducible \GRS of rank at least 2 is equivalent to a quotient of a classic root system of a finite Weyl group.
\end{Theorem}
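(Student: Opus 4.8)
The plan is to split by rank. For rank $2$ there is nothing to do: this is exactly the classification of irreducible rank-$2$ \GRSs in Section~5 of \cite{DF23}, where each such \GRS is checked to be a rank-$2$ quotient of a classic root system. So assume $\RS$ has rank $n\ge 3$. The first step is to show that the hyperplane arrangement $\mathcal{A}(\RS)=\{\alpha^{\perp}\mid\alpha\in\RS\}$ --- equivalently, the arrangement of the associated \redGRS --- is a \emph{crystallographic arrangement} in the sense of Cuntz and Heckenberger, i.e.\ a simplicial arrangement in which, at every chamber, each hyperplane normal is an integral linear combination of the normals of the walls of that chamber. Simpliciality should come out of the structure theory of \cite{DF23}: a chamber determines a base of $\RS$, and a base of a \GRS of rank $n$ has exactly $n$ elements, so every chamber is a simplicial cone. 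Integrality is a height induction built on the closure axioms (the analogue of the root-string argument for classic root systems); the relevant rank-$2$ input is that every rank-$2$ residue $\RS\cap\langle\alpha_i,\alpha_j\rangle$ is, by the rank-$2$ case, a quotient of a classic root system and hence has integral Cartan data. Thus $\mathcal{A}(\RS)$ is a crystallographic arrangement of rank $n\ge 3$.

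Now I would invoke the classification of crystallographic arrangements of rank at least three --- equivalently, of connected finite Weyl groupoids of that rank --- due to Cuntz and Heckenberger. Every such arrangement is either a restriction of the Weyl arrangement of a finite Weyl group to an element of its intersection lattice, or one of the finitely many \emph{sporadic} crystallographic arrangements whose Weyl groupoid is not of standard type. In the first case $\RS$ carries the arrangement of such a restriction, and the closure axioms of a \GRS fix the lengths of the roots along each ray up to the equivalence permitted for \GRSs; hence $\RS$ is equivalent to the corresponding quotient of a classic root system of a finite Weyl group, which is the assertion. In the second case I would rule out a sporadic arrangement altogether: in a \GRS, which of $\alpha+\beta$ and $\alpha-\beta$ belongs to $\RS$ is decided by the sign of $(\alpha,\beta)$ alone, and confronting this rigidity with the Cartan data of each sporadic Weyl groupoid yields a contradiction --- for instance a pair of roots enclosing an obtuse angle whose sum is not a root, or a rank-$2$ residue absent from the list of \cite{DF23}. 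This exhausts the classification and proves the theorem.

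The step I expect to be the main obstacle is deriving simpliciality and integrality of $\mathcal{A}(\RS)$ purely from the \GRS axioms --- that is, showing that a bare \GRS of rank $\ge 3$ is already constrained enough to land in the crystallographic class, which is what makes the classification available --- together with the ensuing finite but delicate case analysis: separating genuine Weyl-arrangement restrictions from the sporadic crystallographic arrangements, and pinning down the root lengths finely enough to get an \emph{equivalence} of \GRSs rather than a mere isomorphism of underlying arrangements. The reductions to the rank-$2$ list of \cite{DF23} and to the classification of finite Weyl groupoids are exactly what keep the argument within the scope of a note.
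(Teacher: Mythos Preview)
Your overall architecture matches the paper's: reduce to the \redGRS, show it yields a crystallographic arrangement, then invoke the Cuntz--Heckenberger classification. But the dichotomy you set up after that point is wrong, and the plan founders on it.

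You assert that every crystallographic arrangement of rank $\ge 3$ is either a restriction of a Weyl arrangement or one of finitely many sporadic arrangements ``not of standard type'', and you propose to rule the latter out entirely. That is not how the classification reads, and it is not how the sporadic cases behave. In the Cuntz--Heckenberger list the $74$ sporadic arrangements \emph{include} the Weyl arrangements of types $E_6,E_7,E_8,F_4$ themselves, together with all their nontrivial restrictions; these obviously do arise as \redGRSs of \GRSs and cannot be excluded. Concretely, every sporadic arrangement of rank $4$ through $8$ is the \redGRS of some \GRS, and in rank $3$ roughly a third of them are as well. So the task in the sporadic regime is not to discard all of them, but to separate the ones that admit a \GRS from the ones that do not, and then to identify each surviving \GRS with a specific quotient of an exceptional root system. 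The obstruction mechanism you describe (obtuse pairs whose sum is not a root, bad rank-$2$ residues) does work for the arrangements that fail --- in the paper this is phrased as the axioms forcing a nonzero isotropic vector --- but it simply does not fire for the many sporadic arrangements that are genuine quotients of $E_n$ or $F_4$.

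A second, smaller gap: even when the arrangement is known to be a restriction of a Weyl arrangement, the sentence ``the closure axioms fix the lengths of the roots along each ray up to equivalence'' is too quick. The multipliers are \emph{not} uniquely determined by the underlying arrangement: for $\WG(R)=B_n$ one gets $BC_n^J$ for any $J\subseteq[n]$, and one sporadic rank-$3$ arrangement supports two inequivalent \GRSs (quotients of $E_7$ and of $E_8$, respectively). The paper pins down the possible multipliers root by root using the rank-$2$ parabolics, and this case analysis is where most of the actual work lies.
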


As already mentioned above, for irreducible \GRSs of rank 2, the assertion was proved in \cite{DF23} (Theorem 5.5). Our approach for treating the higher rank case is based on the observation that one can associate a \emph{reduced root set} $\WG(R)$ with
any \GRS $(R,V)$. As mentioned in the introduction of \cite{DF23}, the reduced root set of a \GRS provides a crystallographic arrangement in the sense of \cite{p-C10}.
Using the classification of finite Weyl groupoids in  \cite{CH15}, the first author
obtained a complete classification of crystallographic arrangements in \cite{p-C10}. The proof of our main result relies strongly on these classification results.

Let $(R,V)$ be a \GRS. Then its reduced root set $\WG(R)$ is the set of its \emph{primitive} elements and it forms a root set of a Weyl groupoid. The only additional information encoded in $(R,V)$ are possible \emph{multipliers} of roots, i.e., for a primitive $\alpha\in R$, the maximal $k\in\mathbb{N}$ such that $\alpha,2\alpha,\ldots,k\alpha\in R$ is called the multiplier of $\alpha$.

In rank two, there are infinitely many finite Weyl groupoids and only finitely many of them are \redGRSs of a \GRS. In rank three, only 15 of the 55 finite Weyl groupoids are related to \GRSs.
Thus one task in this note is to determine those Weyl groupoids which are not induced by a \GRS.
The other task is to check that all the remaining ones are restrictions of Weyl arrangements, and to determine the possible multipliers of all roots.

As a consequence of our main result, the main result of \cite{p-CL-15} implies that every \redGRS of a \GRS is the root system of a Nichols algebra.
This is further evidence that \GRSs are natural objects.

\section{Properties of generalized root systems}

\begin{Definition}[{\cite[Def.\ 1.1]{DF23}}]\label{def:GRS}
Let $(V,(\cdot,\cdot))$ be a finite dimensional euclidean vector space and $\emptyset\ne R\subseteq V$ a finite subset.
The pair $(R,V)$ is called a \emph{generalized root system} if
\begin{enumerate}
\item[(i)] $V = \langle R \rangle$,
\item[(ii)] for all $\alpha,\beta\in R$:
\begin{eqnarray*}
(\alpha,\beta)<0 & \Longrightarrow & \alpha+\beta\in R, \\
(\alpha,\beta)>0 & \Longrightarrow & \alpha-\beta\in R, \\
(\alpha,\beta)=0 & \Longrightarrow & (\alpha+\beta\in R \Longleftrightarrow \alpha-\beta\in R).
\end{eqnarray*}
\end{enumerate}
The elements of $R$ are called \emph{roots}, the \emph{rank} of $(R,V)$ is the dimension of $V$.
\end{Definition}

\begin{Remark}\label{scalar0}
We will frequently use the following property of roots in a \GRS:\\ If $\alpha$, $\beta$ are roots such that $\alpha+\beta\notin R$ and $\alpha-\beta\notin R$, then $(\alpha,\beta)=0$.
\end{Remark}

\begin{Lemma} \label{basiclemma}
Let $(R,V)$ be a \GRS. Then the following hold.

\begin{enumerate}
    \item $R = -R$.
    \item if $0 \neq \alpha \in R$, $\mu := \min \{ \lambda \in \mathbb{R}_{>0} \mid \lambda \alpha \in R \}$ and
    $\beta := \mu \alpha$, then there exists $0 \neq k \in \mathbb{N}$ such that
    $$\{ \lambda \in \mathbb{R} \mid \lambda \beta \in R \} = \{ i \in \mathbb{Z} \mid -k \leq i \leq k \}.$$
    \item If $R$ is of rank $1$, then there exists $0 \neq \alpha \in R$ and $0 \neq k \in \mathbb{N}$
    such that $R = \{ j \alpha \mid j \in \mathbb{Z}, -k \leq j \leq k \}$.
\end{enumerate}
\end{Lemma}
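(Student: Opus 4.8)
The three parts build on each other, so I would treat them in order, all relying on Remark \ref{scalar0} and careful use of the three implications in Definition \ref{def:GRS}(ii).

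For part (1), $R=-R$: take any $\alpha\in R$ and consider the pair $(\alpha,\alpha)=(\alpha,\alpha)>0$ (assuming $\alpha\neq 0$; note $0\in R$ would force $R=\{0\}$, contradicting $V=\langle R\rangle$ unless $V=0$, which is excluded since $R\neq\emptyset$ and $V$ is the span — actually one should first observe $0\notin R$, or handle it, since $(0,\beta)=0$ always). From $(\alpha,\alpha)>0$ the second implication gives $\alpha-\alpha=0\in R$; that's not what we want directly. Instead I would argue: since $0\in R$ is degenerate, first note $0\notin R$ (otherwise $(0,0)=0$ and $0+0=0=0-0\in R$ is consistent, but more importantly the rank-$1$ and minimality statements presuppose nonzero roots — I'd check the exact convention, but the cleanest route is: for $\alpha\in R$, $(\alpha,\alpha)>0$ implies $\alpha-\alpha=0\in R$, hence $0\in R$ always; then $(0,\alpha)=0$ and $0+\alpha=\alpha\in R$ forces $0-\alpha=-\alpha\in R$ by the third implication). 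So the key is the equality case: $(0,\alpha)=0$ and $\alpha=0+\alpha\in R$ gives $-\alpha=0-\alpha\in R$. That proves (1).

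For part (2), fix $\alpha\neq 0$ and set $\beta=\mu\alpha$ with $\mu$ the minimal positive scalar with $\mu\alpha\in R$; this minimum exists because $R$ is finite. I want to show the set of integers $j$ with $j\beta\in R$ is exactly an interval $\{-k,\dots,k\}$. By (1) it is symmetric about $0$ and contains $0$ and $\pm 1$. The content is: (a) it consists only of integer multiples of $\beta$ — i.e. if $\lambda\beta\in R$ with $\lambda>0$ then $\lambda\in\mathbb{Z}$; and (b) it is "downward closed" in the sense that if $j\beta\in R$ for $j\geq 2$ then $(j-1)\beta\in R$. For (b): compute $(j\beta,\beta)=j\mu^2(\alpha,\alpha)>0$, so the second implication yields $j\beta-\beta=(j-1)\beta\in R$. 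For (a): suppose $\lambda>0$ is minimal in $R_{>0}\cdot\beta\setminus\{\text{no constraint}\}$... more carefully, let $\lambda_0$ be the smallest positive real with $\lambda_0\beta\in R$; by minimality of $\mu$ we get $\lambda_0\geq 1$, and in fact $\lambda_0=1$ since $\beta\in R$. Now for any $\lambda$ with $\lambda\beta\in R$, $\lambda>1$: if $\lambda\notin\mathbb{Z}$, pick the fractional part; using $(\lambda\beta,\beta)>0$ repeatedly subtract $\beta$ to land at $\{\lambda\}\beta\in R$ with $0<\{\lambda\}<1$, contradicting minimality of $\mu=1$ (after rescaling). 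This forces $\lambda\in\mathbb{Z}_{>0}$, and combined with (b) and symmetry gives the stated interval with $k=\max\{j\in\mathbb{Z}_{>0}\mid j\beta\in R\}$ (finite since $R$ is finite), or $k$ such that the set is $\{-k,\dots,k\}$.

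For part (3), assume $\operatorname{rank}(R,V)=1$, so $V$ is a line and every root is a scalar multiple of a fixed $\alpha\neq 0$. Apply part (2) to get $\beta=\mu\alpha$ and the integer $k$ with $\{\lambda\in\mathbb{R}\mid\lambda\beta\in R\}=\{-k,\dots,k\}$; since $V=\langle R\rangle$ every element of $R$ lies on this line, so $R=\{j\beta\mid -k\leq j\leq k, j\neq 0\}$ together with $0$ — and by the degenerate-root discussion, $R=\{j\beta\mid j\in\mathbb{Z},\,-k\leq j\leq k\}$, renaming $\beta$ as $\alpha$ gives exactly the claim.

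\textbf{Main obstacle.} The only genuinely delicate point is step (a) in part (2): ruling out non-integer multiples. The subtraction argument $(\lambda\beta,\beta)>0\Rightarrow(\lambda-1)\beta\in R$ works while $\lambda>1$, but one must verify it also applies when $1<\lambda<2$ to reach $(\lambda-1)\beta\in R$ with $0<\lambda-1<1$, contradicting minimality of $\mu$ — here one needs $(\lambda-1)\beta\neq 0$, which holds, and that $\beta,(\lambda-1)\beta$ are positively proportional so the inner product is positive. I would also be slightly careful about whether $0$ counts as a root, since several statements implicitly assume nonzero roots; I'd settle that convention up front (likely $0\notin R$ is intended, or it is harmless) before running the arguments above.
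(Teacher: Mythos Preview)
Your proposal is correct and follows essentially the same route as the paper's proof: for (1) you deduce $0\in R$ from $(\alpha,\alpha)>0$ and then use the third implication with the pair $(0,\alpha)$ to get $-\alpha\in R$; for (2) you use $(\lambda\beta,\beta)>0\Rightarrow(\lambda-1)\beta\in R$ together with the minimality of $\mu$ to force $\Lambda\subseteq\mathbb{Z}_{>0}$ and obtain the interval; and (3) is immediate from (2). The only cosmetic difference is that the paper does not separate your steps (a) and (b) but treats them simultaneously via the single observation that $\lambda>1$ and $\lambda\in\Lambda$ imply $\lambda-1\in\Lambda$, which combined with $\Lambda\subseteq[1,\infty)$ forces $\Lambda=\{1,\ldots,k\}$ directly. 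Your hesitation about whether $0\in R$ is unnecessary: as you yourself note, $0\in R$ is a \emph{consequence} of the axioms (and the paper uses this), so there is no convention to settle.
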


\begin{proof} Let $0 \neq \alpha \in R$. Then $(\alpha,\alpha) >0$ and therefore $0 = \alpha -\alpha \in R$
by  \ref{def:GRS}. As $0,\alpha \in R$, $(0,\alpha) = 0$ and $\alpha = 0+\alpha$, it follows
from \ref{def:GRS} that $-\alpha = 0 -\alpha \in R$. This proves the first assertion.

As $R$ is finite, the set $\{ \lambda \in \mathbb{R}_{>0} \mid \lambda \alpha \in R \}$ is finite
and therefore $\mu$ exists. 

Let $\beta := \mu \alpha$ and  $\Lambda := \{ \lambda \in \mathbb{R}_{>0} \mid \lambda \beta \in R \}$. Then $\lambda \geq 1$
for all $\lambda \in \Lambda$. 
Suppose $1 < \lambda \in \Lambda$. As  $(\lambda \beta,\beta)>0$
we have $(\lambda - 1)\beta \in R$ by \ref{def:GRS} and hence $\lambda-1 \in \Lambda$.
It follows that  $\Lambda = \{1,2,\ldots,k\}$
for some $0 \neq k \in \mathbb{N}$. Now, the second assertion of the lemma follows from the first
and the third is a consequence of the second.
\end{proof}

\begin{Definition}[{\cite[Def.\ 1.1]{DF23}}]
Let $(R,V)$ be a \GRS.
A root $0\ne\alpha\in R$ is called \emph{primitive} if $\alpha=k\alpha'\in R$, $\alpha'\in R$, and $k\in\mathbb{Z}_{>0}$ implies $k=1$.
If $\alpha\in R$ is primitive and $\alpha,2\alpha,\ldots,k\alpha\in R$, $(k+1)\alpha\notin R$, then $k$ is called the \emph{multiplier} of $\alpha$.
\end{Definition}

\begin{Remark} Let $(R,V)$ be a \GRS and $\alpha \in R$. It follows from Lemma \ref{basiclemma}
that $\alpha$ is primitive if and only if $-\alpha$ is primitive in which case
they have the same multiplier. Moreover, if $\alpha$ is primitive with multiplier $k$,
then $\mathbb{R} \alpha \cap R = \{ j \alpha \mid j \in \mathbb{Z}, -k \leq j \leq k \}$.
\end{Remark}

\begin{Definition}[{\cite[Def.\ 1.4]{DF23}}]
Let $(R,V)$ be a \GRS.
A basis $S\subseteq R$ of $V$ is called a \emph{base} if every element of $R$ is a non-negative or non-positive integral linear combination of $S$.
\end{Definition}

\begin{Lemma}[{\cite[Cor.\ 1.7]{DF23}}]
Let $(R,V)$ be a \GRS and let $\alpha \in R$ be primitive.
Then there exists a base $S$ such that $\alpha \in S$.
\end{Lemma}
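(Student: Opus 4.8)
The plan is to realize $\alpha$ as an indecomposable positive root for a suitably chosen regular functional, and then to invoke the description of bases in terms of such functionals. Call $t\in V$ \emph{regular} if $(t,\beta)\neq 0$ for all $\beta\in R$, put $R^+_t:=\{\beta\in R : (t,\beta)>0\}$, and call $\beta\in R^+_t$ \emph{$t$-indecomposable} if it cannot be written as $\gamma+\gamma'$ with $\gamma,\gamma'\in R^+_t$. The input I rely on from \cite{DF23} is that for every regular $t$ the set $S_t$ of $t$-indecomposable roots is a base of $(R,V)$; granting this, it suffices to produce a regular $t$ for which $\alpha$ itself is $t$-indecomposable.

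To build such a $t$, I would first pick a generic point of the hyperplane $\alpha^{\perp}$. For every $\beta\in R\setminus\mathbb{R}\alpha$ the intersection $\alpha^{\perp}\cap\beta^{\perp}$ is a proper subspace of $\alpha^{\perp}$ (otherwise $\beta\in(\alpha^{\perp})^{\perp}=\mathbb{R}\alpha$), so since $R$ is finite there is some $t_0\in\alpha^{\perp}$ with $(t_0,\beta)\neq 0$ for all $\beta\in R\setminus\mathbb{R}\alpha$. Set $t:=t_0+\varepsilon\alpha$ with $\varepsilon>0$. For $\varepsilon$ small enough the following hold: $(t,\alpha)=\varepsilon(\alpha,\alpha)>0$ and $(t,j\alpha)=j\varepsilon(\alpha,\alpha)\neq 0$ for every nonzero $j\alpha\in R$; every $(t,\beta)$ with $\beta\in R\setminus\mathbb{R}\alpha$ is nonzero with the same sign as $(t_0,\beta)$; and $(t,\alpha)<|(t,\beta)|$ for every $\beta\in R\setminus\mathbb{R}\alpha$. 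In particular $t$ is regular and $\alpha\in R^+_t$.

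It then remains to check that $\alpha$ is $t$-indecomposable. Suppose $\alpha=\gamma+\gamma'$ with $\gamma,\gamma'\in R^+_t$. If $\gamma\in\mathbb{R}\alpha$, then also $\gamma'=\alpha-\gamma\in\mathbb{R}\alpha$; writing $\gamma=i\alpha$ and $\gamma'=j\alpha$, primitivity of $\alpha$ together with Lemma \ref{basiclemma} forces $i,j\in\mathbb{Z}$, while $\gamma,\gamma'\in R^+_t$ forces $i,j>0$, so $1=i+j\geq 2$, which is impossible. Hence $\gamma,\gamma'\in R\setminus\mathbb{R}\alpha$, and then
\[
(t,\alpha)=(t,\gamma)+(t,\gamma')>(t,\alpha)+(t,\alpha)=2(t,\alpha),
\]
contradicting $(t,\alpha)>0$. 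Therefore $\alpha\in S_t$, and $S_t$ is a base containing $\alpha$.

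Once the characterization of bases via regular functionals is available, the argument above is essentially bookkeeping; the two points needing a little care are the genericity choice of $t_0$ inside $\alpha^{\perp}$ and the separate treatment of decompositions of $\alpha$ into roots proportional to $\alpha$, which is exactly where primitivity of $\alpha$ -- forcing the two coefficients to be positive integers summing to $1$ -- is used. Accordingly I expect the real difficulty to sit upstream, in the theorem that $S_t$ is genuinely a base, i.e.\ that it spans $V$ and that every root is an integral combination of $S_t$ with coefficients all of one sign.
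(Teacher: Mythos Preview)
The paper does not supply its own proof of this lemma; it simply records it as \cite[Cor.~1.7]{DF23}. So there is nothing in the paper to compare your argument against directly. That said, your approach is the standard one and is essentially the argument behind \cite[Cor.~1.7]{DF23}: realize $\alpha$ as a simple root for a regular functional chosen close to $\alpha^{\perp}$, then invoke the fact (which you correctly flag as the real content, and which the paper later also uses in the proof of Proposition~\ref{GRStoWeylgroupoid}) that the set of $t$-indecomposable positive roots is a base.

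Your execution is sound. Two minor remarks. First, since $0\in R$ by Lemma~\ref{basiclemma}, your definition of ``regular'' should read $(t,\beta)\neq 0$ for all \emph{nonzero} $\beta\in R$; this does not affect anything downstream because $0\notin R^{+}_{t}$. Second, in Case~1 you use that any root proportional to $\alpha$ is an integer multiple of $\alpha$; this is exactly Lemma~\ref{basiclemma}(2) applied with $\mu=1$, which holds because primitivity of $\alpha$ forces the minimal positive scalar with $\mu\alpha\in R$ to be $1$. With these cosmetic fixes your proof is complete, modulo the base-construction theorem you already identified as the upstream input.
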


\begin{Definition}[{\cite[Def.\ 1.1]{DF23}}]
Let $(R,V)$ be a \GRS of rank $r$, $S$ a base of $(R,V)$, and let $\gamma_S : V \rightarrow \mathbb{Z}^r$ be the coordinate map that maps a root to its coordinate vector with respect to $S$.
We call
$$ \WG(R) := \{ \alpha \in R \mid 0\ne \alpha \text{ is primitive} \} $$
the \emph{\redGRS} of $(R,V)$ and denote
$$ \WG_S(R) := \{ \gamma_S(\alpha) \in R \mid 0\ne \alpha \text{ is primitive} \}. $$
\end{Definition}

\begin{Definition}[{\cite[Section 1]{DF23}}]
Two \GRSs $(R_1,V_1)$, $(R_2,V_2)$ are called \emph{equivalent} if there is a vector space isomorphism $V_1\rightarrow V_2$ that maps $R_1$ to $R_2$.
\end{Definition}

\begin{Definition} Let $(R,V)$ be a \GRS. A subset $R'$ of $R$ is called a \emph{parabolic}
subset of $(R,V)$
if $R \cap \langle R' \rangle = R'$.
\end{Definition}

\begin{Lemma}[{\cite[Def.\ 1.1]{DF23}}] Let $(R,V)$ be a \GRS,  $R' \subseteq R$  a parabolic subset of $(R,V)$
and $V' := \langle R' \rangle$. Then $(R',V')$ is a \GRS with respect to the scalar product 
induced from $V$ onto $V'$.
Moreover, $\WG(R') = \WG(R) \cap R'$
\end{Lemma}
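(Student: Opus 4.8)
The plan is to verify the two claims of the statement directly from the definitions. First I would show that $(R',V')$ is a \GRS. Axiom (i) holds by construction since $V' = \langle R' \rangle$. For axiom (ii), let $\alpha,\beta \in R'$; since $R' \subseteq R$ and $(R,V)$ is a \GRS, the relevant sum or difference lies in $R$. But $\alpha+\beta$ and $\alpha-\beta$ both lie in the subspace $\langle R' \rangle = V'$, so by the parabolicity condition $R \cap \langle R' \rangle = R'$ they already lie in $R'$. This settles the implications for the $<0$ and $>0$ cases; for the $=0$ case one uses the same observation in both directions, noting that membership in $R$ is equivalent to membership in $R'$ for vectors of $V'$. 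The scalar product on $V'$ induced from $V$ is again euclidean (positive definite), so $(R',V')$ is genuinely a \GRS.

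For the second claim, I would unwind the definition of the reduced root set. An element $\alpha \in R'$ is primitive \emph{in} $(R',V')$ iff whenever $\alpha = k\alpha'$ with $\alpha' \in R'$, $k \in \mathbb{Z}_{>0}$, one has $k=1$; and $\alpha$ is primitive \emph{in} $(R,V)$ iff the same holds with $\alpha' \in R$. The key point is that for $\alpha \in R'$, any $\alpha'$ with $k\alpha' = \alpha$ lies on the line $\mathbb{R}\alpha \subseteq V'$, hence if $\alpha' \in R$ then $\alpha' \in R \cap \langle R' \rangle = R'$. Therefore the two notions of primitivity coincide on $R'$. Consequently $\WG(R') = \{\alpha \in R' : \alpha \neq 0 \text{ primitive in } (R,V)\} = R' \cap \WG(R)$, which is the asserted equality.

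I do not expect any serious obstacle here; the statement is essentially a bookkeeping exercise. The one point requiring a little care is making sure that in each case the auxiliary vectors ($\alpha \pm \beta$, or $\alpha'$) genuinely lie in $V' = \langle R' \rangle$ before invoking the hypothesis $R \cap \langle R' \rangle = R'$ — this is immediate since $\alpha, \beta \in R' \subseteq V'$ and $V'$ is a subspace, and since $\alpha' \in \mathbb{R}\alpha \subseteq V'$. A secondary point is to observe that the $0 \in R'$ case (which does occur, as $0 \in R$ by Lemma \ref{basiclemma}) causes no trouble: $0 \in \langle R' \rangle$ automatically, and $0$ is excluded from both $\WG(R)$ and $\WG(R')$ by the requirement $\alpha \neq 0$ in the definition.
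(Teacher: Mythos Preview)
Your proposal is correct and is exactly the straightforward verification from the definitions that the paper alludes to; the paper's own proof consists of the single sentence ``Both assertions are straightforward from the definitions,'' and your write-up is precisely what that sentence encapsulates.
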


\begin{proof} Both assertions are straightforward
from the definitions.
\end{proof}

\begin{Lemma} \label{parabolic3crit}
Let $(R,V)$ be a \GRS and assume that $0\ne \alpha\in R$ is contained in a parabolic subsystem $P$ of rank $2$ 
such that $|\WG(P)|=6$. Then the multiplier of $\alpha$ is $1$.
\end{Lemma}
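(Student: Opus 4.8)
The plan is to exploit the structure of a rank-$2$ parabolic $P$ whose reduced root set has $6$ elements — which forces $\WG(P)$ to be (equivalent to) the root system $A_2$, i.e. three lines at mutual angles $60^\circ$. First I would pick a base $S=\{\alpha_1,\alpha_2\}$ of $P$ with $\alpha\in S$, say $\alpha=\alpha_1$; after rescaling, the six primitive roots are $\pm\alpha_1,\pm\alpha_2,\pm(\alpha_1+\alpha_2)$, with $(\alpha_1,\alpha_2)<0$ and all three primitive positive roots having equal length. The goal is to show that $2\alpha_1\notin R$.

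Suppose for contradiction that $2\alpha_1\in R$, so the multiplier of $\alpha_1$ is at least $2$. The key step is to play $2\alpha_1$ against $\alpha_2$ and $\alpha_1+\alpha_2$. Since $(\alpha_1,\alpha_2)<0$ we have $(2\alpha_1,\alpha_2)<0$, so $2\alpha_1+\alpha_2\in R$ by \ref{def:GRS}. But $2\alpha_1+\alpha_2$ lies in $\langle P\rangle$, hence in $P$ since $P$ is parabolic, and it is a non-negative integral combination of $S$ that is not a multiple of a single element of $S$; the only such primitive direction available in $\WG(P)$ is that of $\alpha_1+\alpha_2$, so $2\alpha_1+\alpha_2$ must be a positive integer multiple $m(\alpha_1+\alpha_2)$. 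Comparing $\alpha_1$-coordinates forces $m=2$, hence $2\alpha_1+\alpha_2=2(\alpha_1+\alpha_2)$, i.e. $\alpha_2=0$, a contradiction. (If instead one only knows $2\alpha_1+\alpha_2\in P$ as a root but not its primitivity, the same coordinate bookkeeping against the finite list $\WG(P)=\{\pm\alpha_1,\pm\alpha_2,\pm(\alpha_1+\alpha_2)\}$ and Lemma \ref{basiclemma}(2) still pins it down.) Thus $2\alpha_1\notin R$ and the multiplier of $\alpha$ is $1$.

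The step I expect to be the main obstacle is justifying cleanly that $\WG(P)$ with $|\WG(P)|=6$ really is the hexagonal configuration $A_2$ — that is, that a rank-$2$ \GRS with exactly six primitive roots has its positive primitive roots of the form $\alpha_1,\alpha_2,\alpha_1+\alpha_2$ with $(\alpha_1,\alpha_2)<0$ and equal lengths. This presumably follows from the rank-$2$ classification in \cite{DF23} (Theorem 5.5) or directly from Remark \ref{scalar0} together with a short angle-chase: three lines in the plane, closure under sums/differences at negative/positive angles, and minimality of primitive roots leave only the $A_2$ pattern among configurations with exactly three lines. Once that normal form is in hand, the argument above is just coordinate arithmetic in $\mathbb{Z}^2$ together with the parabolic condition $R\cap\langle P\rangle = P$ and the description of $\mathbb{R}\alpha\cap R$ from Lemma \ref{basiclemma}.
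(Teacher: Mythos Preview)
Your argument is correct and essentially more self-contained than the paper's. The paper simply invokes \cite[Theorem 5.2]{DF23}: the classification of rank-$2$ \GRSs shows that the unique one with $|\WG(P)|=6$ is the system labeled 1(i) there, in which every multiplier is already known to be $1$. You instead rederive this last fact by hand: once $\WG(P)=\{\pm\alpha_1,\pm\alpha_2,\pm(\alpha_1+\alpha_2)\}$, the assumption $2\alpha_1\in R$ together with $(\alpha_1,\alpha_2)<0$ forces $2\alpha_1+\alpha_2\in P$, and coordinate bookkeeping against the three primitive directions gives the contradiction. In fact you do not even need the full rank-$2$ classification for the identification step you flag as the obstacle: given a base $\{\alpha_1,\alpha_2\}$ of $P$ with $\alpha=\alpha_1$, one checks $(\alpha_1,\alpha_2)<0$ (since $\alpha_1-\alpha_2\notin R$ by the base property while $\alpha_1+\alpha_2\in R$), hence $\alpha_1+\alpha_2\in R$ is primitive, and $|\WG(P)|=6$ then forces $\WG(P)$ to be exactly $\{\pm\alpha_1,\pm\alpha_2,\pm(\alpha_1+\alpha_2)\}$.

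One expository quibble: your sentence ``comparing $\alpha_1$-coordinates forces $m=2$, hence $\alpha_2=0$'' is not the right way to phrase the contradiction. The clean statement is that $(2,1)$ is not an integer multiple of $(1,0)$, $(0,1)$, or $(1,1)$, so by Lemma \ref{basiclemma}(2) the element $2\alpha_1+\alpha_2$ cannot lie in $P$ at all. Your parenthetical already says this correctly; just drop the ``$\alpha_2=0$'' line. Also, the ``equal lengths'' remark is irrelevant to the argument and can be omitted.
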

\begin{proof}
Let $k$ be the multiplier of $\alpha$. Then $k\alpha$ is contained in the parabolic subsystem. By \cite[Theorem 5.2]{DF23}, the only \redGRS of rank $2$ with $6$ elements is the one labeled 1(i) in \cite[Section 5.2]{DF23}. In this \GRS, no root has a multiplier greater than $1$, thus $k=1$.
\end{proof}

\begin{Lemma} \label{parabolic4crit} 
Let $(R,V)$ be a \GRS and assume that $0\ne \alpha\in R$ is a primitive root of 
$(R,V)$ such that $2 \alpha \in R$. Assume that $\alpha$ is contained in a parabolic subsystem $P$ of rank 2
such that $|\WG(P)|=8$. 
Then there exists a primitive root $\gamma \in \WG(P)$
such that $\WG(P) = \Pi \cup -\Pi$ where $\Pi = \{ \alpha, \gamma, \gamma - \alpha, \gamma+\alpha \}$.
\end{Lemma}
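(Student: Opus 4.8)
The plan is to reduce the statement to the classification of rank-2 \GRSs in \cite{DF23}. First I would record that $2\alpha$ actually lies in $P$: by hypothesis $2\alpha\in R$, and $2\alpha\in\langle\alpha\rangle\subseteq\langle P\rangle$, so parabolicity gives $2\alpha\in R\cap\langle P\rangle=P$. Since $\alpha$ is primitive in $R$ and lies in $P$, it lies in $\WG(P)=\WG(R)\cap P$. Thus $P$ is a rank-2 \GRS with $|\WG(P)|=8$ that possesses a primitive root (namely $\alpha$) of multiplier at least $2$.

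Next I would invoke \cite[Theorem 5.2]{DF23}: up to equivalence there is a unique \redGRS of rank $2$ with $8$ elements, and it is the root set of the classic root system of type $B_2$. Hence I may fix primitive roots $\beta_1,\beta_2\in\WG(P)$ with $(\beta_1,\beta_2)=0$ and
\[
\WG(P)=\{\pm\beta_1,\ \pm\beta_2,\ \pm(\beta_1+\beta_2),\ \pm(\beta_1-\beta_2)\}.
\]
The key intermediate claim is that among these eight primitive roots only $\pm\beta_1$ and $\pm\beta_2$ can have multiplier $\ge 2$. To see this, suppose a ``long'' root, say $\beta_1+\beta_2$, had multiplier $\ge 2$; then $2(\beta_1+\beta_2)\in R\cap\langle P\rangle=P$, and since $(2(\beta_1+\beta_2),\beta_1)>0$, Definition \ref{def:GRS} forces $\beta_1+2\beta_2=2(\beta_1+\beta_2)-\beta_1\in P$. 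But by Lemma \ref{basiclemma} every nonzero element of $P$ lies on the line through a primitive root of $P$, whereas $\beta_1+2\beta_2$ is proportional to none of $\beta_1,\beta_2,\beta_1\pm\beta_2$ — a contradiction. The same computation with $\beta_1-\beta_2$ in place of $\beta_1+\beta_2$ rules out the other two long roots. Therefore the hypothesis $2\alpha\in R$ forces $\alpha\in\{\pm\beta_1,\pm\beta_2\}$.

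Finally, after replacing $\beta_1$ by $-\beta_1$ and/or interchanging $\beta_1$ with $\beta_2$ — operations that leave the displayed description of $\WG(P)$ unchanged — I may assume $\alpha=\beta_1$. Put $\gamma:=\beta_2$. Then $\gamma$ is primitive, $\gamma-\alpha=\beta_2-\beta_1$ and $\gamma+\alpha=\beta_2+\beta_1$ both lie in $\WG(P)$, and since $\beta_1,\beta_2$ are linearly independent the four vectors $\alpha,\gamma,\gamma-\alpha,\gamma+\alpha$ are pairwise non-proportional; hence $\Pi:=\{\alpha,\gamma,\gamma-\alpha,\gamma+\alpha\}$ satisfies $\Pi\cup-\Pi=\WG(P)$, which is the assertion.

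The only genuine input is the identification of $\WG(P)$ with the $B_2$-configuration, which is a pure classification statement (equivalently: there is essentially one crystallographic arrangement of rank $2$ with four lines). Given this, everything else is an elementary check against Definition \ref{def:GRS}; the one place that demands a bit of care is the final bookkeeping — choosing the right representative of $\alpha$ among the orbits $\pm\beta_1,\pm\beta_2$ and verifying that $\Pi\cup-\Pi$ really exhausts $\WG(P)$ — and even that is routine.
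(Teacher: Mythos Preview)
Your approach is essentially the paper's: both invoke the rank-2 classification from \cite[Theorem 5.2]{DF23} and identify which primitive roots of a $B_2$-type parabolic can carry multiplier $\ge 2$. The paper cites the full list of rank-2 \GRSs with eight primitive roots (cases 1(ii), 2(i), 2(ii) of \cite[Section 5.2]{DF23}), rules out 1(ii) because all its multipliers are $1$, and simply reads off from 2(i) and 2(ii) that $\alpha$ must correspond to $\pm x$ or $\pm y$; you instead use only the shape of the reduced root set and derive the multiplier constraint directly from the \GRS axioms, which is a touch more self-contained.

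One step needs repair. You assert that $\beta_1,\beta_2$ may be chosen with $(\beta_1,\beta_2)=0$, but equivalence of \GRSs is merely a linear isomorphism and does not respect the scalar product, so the orthogonality of the short roots in the standard $B_2$ does not automatically transfer to $P$ (and for a $P$ with all multipliers equal to $1$ it can genuinely fail). Fortunately you do not need it. From $\beta_1+\beta_2,\,\beta_1\in P$ together with $(\beta_1+\beta_2)-\beta_1=\beta_2\in P$ and $(\beta_1+\beta_2)+\beta_1=2\beta_1+\beta_2\notin P$ (the latter being proportional to no element of $\WG(P)$), Definition \ref{def:GRS} already forces $(\beta_1+\beta_2,\beta_1)>0$, and your argument then runs verbatim. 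Alternatively, observe that if $2(\beta_1+\beta_2)\in P$ then none of $2(\beta_1+\beta_2)\pm\beta_1$, $2(\beta_1+\beta_2)\pm\beta_2$ lies in $P$, so by Remark \ref{scalar0} the vector $\beta_1+\beta_2$ is orthogonal to $\langle\beta_1,\beta_2\rangle=\langle P\rangle$ and hence isotropic --- an even quicker contradiction.
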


\begin{proof} Let $P$ be a parabolic root system containing
$\alpha$ and hence also $2\alpha$ such that $|\WG(P)|=8$. By  \cite[Theorem 5.2]{DF23}, a \GRS  of rank 2 such that the \redGRS contains $8$ elements is equivalent to one of the \GRS 1(ii), 2(i) or 2(ii) listed in 
\cite[Section 5.2]{DF23}. Thus, there is a linear isomorphism $\varphi$ from 
$\langle P \rangle$ to $\langle x,y \rangle$ (where $x$ and $y$
are as in \cite[5.2.\ 1(ii), 2(i) or 2(ii)]{DF23})
such that $\varphi(P)$ is mapped onto the set of roots $R'$
of one those \GRS. As $\alpha$ and $2\alpha$ are in $P$, the 
case 1(ii) is impossible. In Case 2(i), $\varphi(\alpha) \in 
\{ x,-x \}$
and in Case 2(ii), $\varphi(\alpha) \in \{ x,-x,y,y \}$.
If $\varphi(\alpha) \in \{ x,-x \}$, then the claim holds
with $\gamma := \varphi^{-1}(y)$ and if 
$\varphi(\alpha) \in \{ y,-y \}$ then the claim holds
with $\gamma := \varphi^{-1}(x)$.
\end{proof}

\section{Crystallographic arrangements}

We briefly recall the notions of simplicial and crystallographic arrangements (cf.\ \cite[1.2, 5.1]{OT}, \cite{p-C10}, \cite{C19}).

\begin{Definition}\label{A_R_2}
Let $r\in\mathbb{N}$, $V:=\mathbb{R}^r$, and $\mathcal{A}$ be a finite set of linear hyperplanes in $V$, i.e.\ an \df{arrangement of hyperplanes}.
Let $\mathcal{K}(\mathcal{A})$ be the set of connected components (\df{chambers}) of $V\backslash \bigcup_{H\in\mathcal{A}} H$.
If every chamber $K$ is an \df{open simplicial cone}, i.e.\ there exist
$\alpha^\vee_1,\ldots,\alpha^\vee_r \in V$ such that
\begin{equation*}
K = \Big\{ \sum_{i=1}^r a_i\alpha^\vee_i \mid a_i> 0 \quad\forall\:\:
i=1,\ldots,r \Big\} =: \langle\alpha^\vee_1,\ldots,\alpha^\vee_r\rangle_{>0},
\end{equation*}
then $\mathcal{A}$ is called a \df{simplicial arrangement}.
\end{Definition}

\begin{Definition}\label{def:quotient}
Let $(\mathcal{A},V)$ be an arrangement of hyperplanes and $X\le V$.
The \emph{restriction} $\mathcal{A}^X$ of $\mathcal{A}$ to $X$ is defined by 
$$ \mathcal{A}^X := \{ X\cap H \mid H \in \mathcal{A},  X\not\subseteq H \} $$
and is an arrangement in $X$.

Let $(R,V)$ be a \GRS, $\mathcal{A}:=\{\alpha^\perp \mid \alpha\in R\}$, and $Y\le V$ be a subspace generated by elements of $R$. We write $X:=Y^\perp$.
Then the projection $V\rightarrow X$ defines a \GRS $(R',X)$ for the restriction $\mathcal{A}^X$ which is called a \emph{quotient} in \cite{DF23}.

Thus quotients are restrictions of arrangements of hyperplanes to elements of the intersection lattice of the arrangement.
\end{Definition}

\begin{Definition}[{\cite[Definition 2.3]{p-C10}}]\label{cryst:arr}
Let $\mathcal{A}$ be a simplicial arrangement in $V$ and $\RS\subseteq V^*$ a finite set
such that $\mathcal{A} = \{ \ker \alpha \mid \alpha \in \RS\}$ and $\mathbb{R}\alpha\cap \RS=\{\pm \alpha\}$
for all $\alpha \in \RS$.
We call $(\mathcal{A},V,\RS)$ a \df{crystallographic arrangement} if
for all chambers $K\in\mathcal{K}(\mathcal{A})$:
\begin{equation}\label{equ:crys}
\RS \subseteq \sum_{\alpha \in B^K} \mathbb{Z} \alpha,
\end{equation}
where
\[ B^K = \{ \alpha\in \RS \mid \forall x\in K\::\:\alpha(x)\ge 0,\:\: \langle \ker \alpha \cap \overline{K}\rangle = \ker \alpha \} \]
corresponds to the set of walls of $K$.
\\
Two crystallographic arrangements $(\mathcal{A},V,\RS)$, $(\mathcal{A}',V,\RS')$ in $V$ are called \df{equivalent}
if there exists $\psi\in\Aut(V^*)$ with $\psi(\RS)=\RS'$; we write $(\mathcal{A},V,\RS)\cong(\mathcal{A}',V,\RS')$.
\\
If $\mathcal{A}$ is an arrangement in $V$ for which a set $\RS\subseteq V^*$ exists such that $(\mathcal{A},V,\RS)$ is crystallographic, then we say that $\mathcal{A}$ is \df{crystallographic}.
\end{Definition}

\noindent
We use the convenient notation introduced in \cite{C19}:

\begin{Definition}{{\cite[Def.\ 3.3]{C19}}}\label{rootscartan}
Let $(\mathcal{A},V,\RS)$ be a crystallographic arrangement and $K$ a chamber.
Fixing an ordering for $B^K$, we obtain a coordinate map
$$ \coord^K : V\rightarrow \mathbb{R}^r \quad \text{with respect to}\quad B^K.$$
The elements of the standard basis $\{\alpha_1,\ldots,\alpha_r\}=\coord^K(B^K)$ are called \df{simple roots}.
The set
\[ R^K := \{ \coord^K(\alpha) \mid \alpha\in \RS \} \subseteq \mathbb{N}_0^r \cup -\mathbb{N}_0^r \]
is called the set of \df{roots} of $\mathcal{A}$ at $K$. The roots in $R^K_+:=R^K\cap \mathbb{N}_0^r$ are called \df{positive}.
\end{Definition}

We now identify $V$ and $V^*$ via the euclidean form.
The proof of our main theorem relies on the following observation which
is stated in \cite[Proposition 1.6]{DF23} and proven for example in \cite[\S 10.1]{Hu72} for classic root systems:

\begin{Proposition} \label{GRStoWeylgroupoid}
Let $(R,V)$ be a \GRS and $\mathcal{A}:=\{\alpha^\perp \mid \alpha\in R\}$.
Then $(\mathcal{A},V,\WG(R))$ is a crystallographic arrangement.
Moreover, if $(R,V)$ is irreducible, then $(\mathcal{A},V,\WG(R))$
is irreducible.
\end{Proposition}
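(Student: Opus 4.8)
The plan is to verify the two defining conditions of a crystallographic arrangement for the triple $(\mathcal{A},V,\WG(R))$: first that $\mathcal{A}$ is simplicial with $\mathbb{R}\alpha\cap\WG(R)=\{\pm\alpha\}$, and second that the integrality condition \eqref{equ:crys} holds at every chamber. The condition $\mathbb{R}\alpha\cap\WG(R)=\{\pm\alpha\}$ is immediate: by definition $\WG(R)$ consists of primitive roots, and two primitive roots on the same line through the origin must (by Lemma \ref{basiclemma}(2) applied to the minimal positive scalar) be negatives of one another. So the real content is simpliciality together with the $\mathbb{Z}$-span condition.

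First I would produce a base $S$ of $(R,V)$ — this exists by \cite[Cor.\ 1.7]{DF23} (the lemma cited in the excerpt) — and consider the coordinate map $\gamma_S$. The crucial classical input, stated in \cite[Proposition 1.6]{DF23} and proved for classic root systems in \cite[\S 10.1]{Hu72}, is that every element of $R$ is either a non-negative or a non-positive integer combination of $S$; in particular every primitive root lies in $\mathbb{Z}^r$ under $\gamma_S$ and has all coordinates of one sign, so $\WG_S(R)\subseteq\mathbb{N}_0^r\cup-\mathbb{N}_0^r$. Next I would identify the chambers of $\mathcal{A}$ with the Weyl chambers determined by bases: the positive system cut out by $S$ is the open cone $\{x\mid (x,\alpha)>0\ \forall\alpha\in S^\vee\}$ of an open simplicial cone (here one uses that $S$ is a basis of $V$), and conversely every chamber of $\mathcal{A}$ arises this way, because walking across a hyperplane $\alpha^\perp$ of a base-chamber and invoking the root-system axioms (Definition \ref{def:GRS}) produces a new base — this is the standard argument showing the ``Weyl groupoid'' acts transitively on chambers/bases. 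Granting this, for an arbitrary chamber $K$ we have a base $S_K$ with $B^K$ corresponding to $S_K^\vee$, and the span condition $\WG(R)\subseteq\sum_{\alpha\in B^K}\mathbb{Z}\alpha$ is exactly the statement that every primitive root is an integer combination of $S_K$, which is the base property for $S_K$. Irreducibility transfers directly: if $\mathcal{A}$ decomposed as a product then $V$ would split orthogonally into subspaces each spanned by a subset of $\WG(R)$, contradicting irreducibility of $(R,V)$; conversely the standard notion of irreducibility for a crystallographic arrangement (connectedness of the graph on simple roots with edges for non-orthogonal pairs) is equivalent to the same statement about $\WG(R)$.

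The main obstacle I expect is the bijection between chambers of $\mathcal{A}$ and bases of $(R,V)$ — more precisely, showing that every chamber contains a base in its closure as its set of walls. One direction (a base gives a simplicial chamber) is elementary linear algebra once one knows $R$ lies on the positive/negative side consistently. The other direction — that there are no ``extra'' chambers and that adjacent chambers correspond to bases related by a single reflection-like move — is where the generalized-root-system axioms do real work, and it is essentially the content of \cite[\S 10.1]{Hu72} adapted to the non-symmetric setting; I would either cite the corresponding development in \cite{DF23} (Sections 1–2, where the authors establish that quotients and bases behave as in the classical case) or reproduce the short induction: given a chamber $K$ and a wall $H=\alpha^\perp$, the axioms force the ``reflection'' of the base across $H$ to again be a base, and iterating connects any chamber to the base chamber. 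Modulo that structural fact, everything else is bookkeeping with signs of coordinates and the definition of $B^K$.
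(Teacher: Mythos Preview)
Your proposal is correct, but the paper takes a more direct route that sidesteps precisely the obstacle you flag. Rather than starting from a base and then arguing that \emph{every} chamber arises from some base via wall-crossing, the paper starts from an arbitrary chamber $K$: pick $\zeta$ in its interior, set $\Pi = \{\alpha \in R \mid (\alpha,\zeta)>0\}$, and take $B^K$ to be the set of elements of $\Pi$ that are not sums of two elements of $\Pi$. Every $\beta\in\Pi$ then decomposes (in finitely many steps, since $(\cdot,\zeta)$ is positive and additive on $\Pi$) as a $\mathbb{Z}_{\ge 0}$-sum of indecomposables, so the integrality condition $R\subseteq\sum_{\alpha\in B^K}\mathbb{Z}\alpha$ is immediate. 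What remains --- that $B^K$ is exactly the set of walls of $K$ and that $K$ is an open simplicial cone --- is then literally the argument of \cite[\S 10.1, Theorem$'$ (3),(4),(5)]{Hu72}, which uses only the sign conditions on $(\alpha,\beta)$ and no reflection symmetry.

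The payoff of the paper's direction is that the chamber--base bijection and the transitivity of wall-crossing never enter: each chamber gets its own $B^K$ built from scratch, so there is nothing to connect. Your approach would also work and is closer in spirit to the Weyl-groupoid formalism, but it imports structural input from \cite{DF23} (existence of bases, the adjacent-base construction) that the paper's argument does not need. Incidentally, your treatment of irreducibility is more explicit than the paper's, which leaves that assertion without comment.
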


\begin{proof}
For each chamber $K$, choose $\zeta\in V$ in the interior of $K$ and set $\Pi:=\{\alpha \in R \mid (\alpha,\zeta)>0 \}$. Note that $R=\Pi\cup -\Pi \cup \{0\}$ because for $\alpha\in R$, $(\alpha,\zeta)=0$ implies $\alpha=0$ since $\zeta$ is in the interior of $K$. Let $B^K$ be the set consisting of those elements of $\Pi$ which do not decompose as a sum of elements of $\Pi$.
If $\beta\in \Pi$, then after decomposing $\beta$ into sums of elements of $\Pi$ we obtain
$\beta\in \sum_{\alpha\in B^K} \mathbb{Z}_{\ge 0} \alpha$; the same argument works for $\beta\in -\Pi$, and of course $0$ is also an integral combination of $B^K$.\\
It remains to check that $B^K$ corresponds to the set of walls of $K$ and that $K$ is an open simplicial cone.
This is literally the proof of \cite[\S 10.1, Theorem' (3),(4),(5)]{Hu72}.
\end{proof}

With the above notation, $B^K=S$ is a base of $(R,V)$ and we have $\WG_S(R) = R^K$.
Hence for any \GRS $(R,V)$ and each base $S$ we obtain that $\WG_S(R)$ is a root set of a crystallographic arrangement (or equivalently of a Weyl groupoid).

We can now consider each crystallographic arrangement and determine the corresponding \GRSs. The following is a complete list of cases:

\begin{Theorem}[cf.\ {\cite{CH15}, \cite{p-C10}}]\label{cryarrclas}
There are (up to equivalence) exactly three families of irreducible crystallographic arrangements 
of rank at least $2$:
\begin{enumerate}
\item The family of rank two parametrized by triangulations of convex $n$-gons by non-intersecting diagonals.
\item For each rank $r>2$, arrangements of type $A_r$, $B_r$, $C_r$
and $D_r$, and a further series of $r-1$ arrangements.
\item Another $74$ ``sporadic'' arrangements of rank $r$, $3\le r \le 8$.
\end{enumerate}
\end{Theorem}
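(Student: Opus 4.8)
The plan is to deduce this from two known classification results together with the dictionary between crystallographic arrangements and Weyl groupoids. First I would recall from \cite{p-C10} that, up to equivalence, a crystallographic arrangement $(\mathcal{A},V,\RS)$ is the same datum as a connected, simply connected Cartan scheme of finite type: reading off from each chamber $K$ the integral Cartan matrix determined by the coordinate expansions of the roots with respect to $B^K$ --- integrality being precisely condition \eqref{equ:crys} --- produces a finite Weyl groupoid, and conversely every finite connected Weyl groupoid arises in this way from a crystallographic arrangement, unique up to equivalence. Under this correspondence the rank is preserved, equivalence of arrangements matches equivalence of Weyl groupoids, and irreducibility of the arrangement matches connectedness of the Cartan scheme. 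Hence it suffices to enumerate the irreducible finite Weyl groupoids of rank at least $2$.

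Next I would invoke the classification of finite Weyl groupoids from \cite{CH15} (the rank-two case going back to earlier work of Cuntz and Heckenberger). In rank $2$ the relevant combinatorial datum --- the cyclic sequence of Cartan entries met when passing once around the chambers --- is in bijection with triangulations of a convex $n$-gon by non-crossing diagonals, giving family (1). For rank $r>2$, \cite{CH15} shows that every irreducible finite Weyl groupoid is either of one of the classical types $A_r$, $B_r$, $C_r$, $D_r$, or lies in the additional series of $r-1$ Weyl groupoids occurring in each rank $r>2$, or --- and this happens only for $3\le r\le 8$ --- is one of exactly $74$ further ``sporadic'' examples; in particular nothing new occurs in rank $\ge 9$. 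Translating this list back through the equivalence of \cite{p-C10} gives exactly items (1)--(3).

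The substance of the argument is of course entirely contained in the two cited papers --- the equivalence of crystallographic arrangements with finite Weyl groupoids in \cite{p-C10}, and the (largely computer-assisted) enumeration of finite Weyl groupoids in \cite{CH15}. I therefore do not expect a genuine obstacle here: what remains is bookkeeping, namely checking that the notion of equivalence in Definition \ref{cryst:arr} agrees with the notion used for Weyl groupoids in \cite{CH15} and that ``irreducible'' is compatible on the two sides. This is routine, and the theorem mainly serves to pin down, for the remainder of the paper, the finite list of arrangements that must be inspected to decide which arise from a \GRS.
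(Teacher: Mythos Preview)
Your proposal is correct and matches the paper's approach: the paper does not prove this theorem at all but simply states it as a citation of \cite{CH15} and \cite{p-C10}, relying on exactly the dictionary between crystallographic arrangements and finite Weyl groupoids that you describe. Your sketch merely makes explicit what the paper leaves implicit in the ``cf.'' citation.
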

The $74$ sporadic root sets are listed in \cite[\S B]{CH15}. For a concrete description of the series of rank at least 3 see 
Remark \ref{remseries}.

\subsection{Proof of Theorem \ref{mainresult}}\label{proof:main}
Let $(V,R)$ be an irreducible \GRS of rank at least $2$.
By Proposition \ref{GRStoWeylgroupoid}, its reduced root set $\WG(R)$ is the set of roots of an irreducible
crystallographic arrangement of rank at least $2$.
By Theorem \ref{cryarrclas} the irreducible crystallographic arrangements are classified and they are subdivided into three classes.
If $\WG(R)$ is of rank $2$, then the assertion follows from Theorem \ref{proprank2}.
If $\WG(R)$ is a member of series of rank at least $3$, the assertion follows from Proposition \ref{propseries}.
Finally, if $\WG(R)$ is one of the $74$ sporadic crystallographic arrangements, then the assertion follows from Proposition \ref{propsporadic}.

\section{Infinite series}

\subsection{Rank two}
The \GRSs of rank two are classified in \cite[Theorem 5.2]{DF23} and have already been identified as quotient root systems. For convenience we recall Theorem 5.5 of \cite{DF23}.

\begin{Theorem}[{\cite[Theorem 5.5]{DF23}}]\label{proprank2}
Every irreducible \GRS of rank 2 is equivalent to 
a quotient of a classic root system.
\end{Theorem}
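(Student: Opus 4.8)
The statement is \cite[Theorem 5.5]{DF23} and may simply be invoked; for completeness we sketch the argument one would give. The plan is to reduce to the classification of rank-two \GRSs in \cite[Theorem 5.2]{DF23}: that classification produces a finite list of systems up to equivalence, so it suffices to realize each of them as a quotient of a classic root system in the sense of Definition \ref{def:quotient}.

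The first step is to pin down the reduced root set. By Proposition \ref{GRStoWeylgroupoid}, $\WG(R)$ is a rank-two crystallographic arrangement and hence, by Theorem \ref{cryarrclas}, corresponds to a triangulation of a convex polygon. Writing $|\WG(R)|=2n$, the sum and difference conditions of Definition \ref{def:GRS} applied to pairs of non-orthogonal primitive roots require that $\alpha+\beta$ or $\alpha-\beta$ always land on a wall-ray of the arrangement; checking this against the polygon arrangements leaves only the shapes of $A_2$, $B_2$ and $G_2$ (with $n=3,4,6$; the smaller arrangements support no irreducible rank-two \GRS). For each of these three shapes one then determines which multiplier patterns can occur: Lemma \ref{parabolic3crit} forces all multipliers to equal $1$ on the $A_2$-shape, and Lemma \ref{parabolic4crit} pins down the configuration around any primitive root of multiplier $2$ on the $B_2$-shape, so only finitely many patterns survive, reproducing the entries of \cite[\S 5.2]{DF23}.

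It remains to exhibit each surviving system as a quotient. Whenever all multipliers are $1$ the system is itself a classic root system ($A_2$, $B_2$ or $G_2$), hence trivially a quotient of one. For the systems on the $B_2$-shape with a primitive root of multiplier $2$ -- the entries $2(i)$ and $2(ii)$ of \cite[\S 5.2]{DF23} -- one takes a quotient of a classical root system of rank $3$: restricting $B_3$ to the plane perpendicular to $e_1-e_2$ yields the system in which one short direction of the quotient is doubled, and restricting $C_3$ to the plane perpendicular to the long root $2e_1$ yields the one in which both short directions are doubled, the multipliers arising because, after projection, a short root and a long root of the ambient system become proportional. Any $G_2$-shaped system carrying roots of multiplier $2$ is obtained in the same way from a suitable restriction of a classical root system of higher rank (e.g.\ of type $F_4$). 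The one genuinely delicate point is completeness -- that every entry of \cite[Theorem 5.2]{DF23} has been accounted for and that no restriction yields a configuration outside that list -- but since \cite[Theorem 5.2]{DF23} is a complete finite classification this is a finite verification, which is exactly what \cite[Theorem 5.5]{DF23} carries out.
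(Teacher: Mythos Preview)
The paper gives no proof of this theorem at all: it simply restates \cite[Theorem~5.5]{DF23} as a citation, prefacing it with ``For convenience we recall Theorem~5.5 of \cite{DF23}.'' Your opening sentence (``The statement is \cite[Theorem~5.5]{DF23} and may simply be invoked'') is therefore exactly what the paper does, and nothing more is required.

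The remainder of your proposal is a supplementary sketch not present in the paper. As an outline of how one would deduce Theorem~5.5 from the classification in \cite[Theorem~5.2]{DF23} it is broadly reasonable, but a couple of points are worth noting. First, the exposition is slightly muddled: you announce that you will take \cite[Theorem~5.2]{DF23} as a black box, yet the paragraph beginning ``The first step is to pin down the reduced root set'' then partially re-derives which rank-two crystallographic arrangements can underlie a \GRS; this is redundant once 5.2 is assumed. Second, your appeal to Lemmas~\ref{parabolic3crit} and~\ref{parabolic4crit} to constrain multipliers is mildly circular in this context, since the proofs of those lemmas themselves invoke \cite[Theorem~5.2]{DF23}; it is harmless because you are already granting 5.2, but it would be cleaner to read the multiplier patterns directly off the list in \cite[\S5.2]{DF23}. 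Finally, the $G_2$-shaped cases with nontrivial multipliers are handled only by the phrase ``a suitable restriction \dots\ (e.g.\ of type $F_4$)'', which is vaguer than the $B_3$/$C_3$ realisations you give for the $B_2$-shaped cases; for a genuine proof one would want to name the specific quotients, as \cite{DF23} does.
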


\subsection{Series in rank greater than two}
Let $n \geq 3$ be a natural number and put $[n] :=
\{1,\ldots,n \}$. Let $V$ be an $n$-dimensional real vector space and
let $B = (b_1,\ldots,b_n)$ be a basis of $V$. 
 We put
\begin{eqnarray*}
A_{n-1} &:=& \{ b_i - b_j \mid 1 \leq i \neq j \leq n \}, \\
D_n  &:=& \{ \varepsilon b_i+ \varepsilon' b_j \mid 1 \leq i \neq j  \leq n,\:\: \varepsilon,\varepsilon' \in \{1,-1\}\}, \\
B_n  &:=& D_n \cup \{ \varepsilon b_i \mid 1 \leq i \leq n,\:\: \varepsilon \in \{ 1,-1\} \}
\end{eqnarray*}
and for $J \subseteq [n]$ we put
\begin{eqnarray*}
X_J &:=& \{ 2 \varepsilon b_j \mid j \in J,\:\: \varepsilon \in \{ 1,-1\} \}, \\
DC_n^J &:=& D_n \cup X_J \quad\mbox{and}\quad BC_n^J := B_n \cup X_J.
\end{eqnarray*}

We put $C_n := DC_n^{[n]}$. Note that $B_n = BC_n^{\emptyset}$ and $D_n = DC_n^{\emptyset}$.

\begin{Remark}\label{remseries}
The sets $A_{n-1}$, $B_n$, $DC_n^J$ are the root sets of the crystallographic arrangements in the series mentioned in Theorem \ref{cryarrclas} $(2)$.
\end{Remark}

\begin{Lemma} \label{lemseries1}
Let $(\cdot,\cdot)$ be a scalar product on $V$ such
that $B$ is an orthonormal basis of $V$. Then the following
hold.

\begin{enumerate}

\item Let $R \in \{ A_{n-1}, B_n, C_n, D_n \}$. Then
$(V,R \cup \{ 0_V \})$ is a \GRS that is
a (trivial) quotient of a classic root system and
$\WG(R) =R$.
\item Let $J \subseteq [n]$ and $R = DC_n^J$.
Then $(V,R \cup \{ 0_V \})$ is a \GRS that is equivalent to
a quotient of a classic root system and $\WG(R) =R$.
\item Let $J \subseteq [n]$ and $R = BC_n^J$.
Then $(V,R \cup \{ 0_V \})$ is a \GRS that is equivalent to
a quotient of a classic root system and $\WG(R) =B_n$.
\end{enumerate}
\end{Lemma}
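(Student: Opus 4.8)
The plan is to verify the three claims by direct computation, using the fact that $B$ is orthonormal so that all scalar products between roots are easy to read off, and then to exhibit each root set as a restriction of a classic Weyl arrangement. For part (1) I would first check the generalized root system axioms of Definition \ref{def:GRS} for each $R\in\{A_{n-1},B_n,C_n,D_n\}$: given two roots $\alpha,\beta$, compute $(\alpha,\beta)$ (which is an integer in $\{-2,-1,0,1,2,4\}$ depending on how many common or opposite coordinates they share) and check case by case that the required sum or difference lies in $R$; the computation is routine because these are the classic root systems and the axioms here are weaker than the usual ones. Since $A_{n-1},B_n,C_n,D_n$ are literally the root sets of the simply-laced or classical Weyl groups (or $C_n=DC_n^{[n]}$ with all roots doubled at the ends), each is tautologically the set of roots of a Weyl arrangement restricted to the whole space, i.e.\ a trivial quotient, and primitivity is immediate: no root is a positive integer multiple of another, so $\WG(R)=R$.

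For part (2), with $R=DC_n^J$, the new feature is the presence of both $\pm b_i\pm b_j$ (the $D_n$ part) and $\pm 2b_j$ for $j\in J$, but never $\pm b_j$ alone. I would check the axioms as before; the only genuinely new scalar products are those involving a vector $2b_j$, and here $(2b_j, 2b_j)=4>0$ gives $2b_j-2b_j=0\in R$, $(2b_j, -2b_j)<0$ gives $0\in R$, $(2b_j, b_i\pm b_j)=\pm 2>0$ (for $i\ne j$) requires $2b_j-(b_i+b_j)=b_j-b_i\in D_n\subseteq R$, and so on. To realize $DC_n^J$ as a quotient, I would take the classic root system $C_m$ in a larger space (with $m=n+|J^c|$, say) and restrict to the subspace orthogonal to a suitable set of roots: the standard trick is that restricting $C_m$ to a coordinate subspace produces $D_n$ on the coordinates where the short roots were killed and $C$-type (doubled) roots on the others, which is exactly $DC_n^J$. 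Primitivity gives $\WG(R)=R$ since $b_i\pm b_j$ and $2b_j$ are each primitive and no root is a multiple of another (note $2b_j$ is primitive because $b_j\notin R$).

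For part (3), with $R=BC_n^J$, the situation differs in that both $b_j$ and $2b_j$ appear for $j\in J$, so $b_j$ is primitive with multiplier $2$ and $2b_j$ is not primitive; hence $\WG(R)=B_n$ rather than $R$. The axiom check is the same flavor: the critical new scalar products are $(b_j,2b_j)=2>0$ requiring $2b_j-b_j=b_j\in R$ (fine), $(b_j, b_i)=0$ (for $i\ne j$) requiring $b_i+b_j\in R\Leftrightarrow b_i-b_j\in R$, both of which hold since $D_n\subseteq R$, and $(b_j, b_i-b_j)=-1<0$ requiring $b_i\in R$ (fine). The realization as a quotient mirrors part (2): restrict a classic $BC$-type (i.e.\ $B_m$ or $C_m$) arrangement, or observe that $BC_n^J$ is the restriction of $C_m$ to an appropriate intersection-lattice element where some short roots $b_j$ survive. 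I expect the main obstacle to be organizing the quotient constructions in part (2) and (3) cleanly --- choosing the ambient classic root system and the subspace so that the projection lands exactly on $DC_n^J$ or $BC_n^J$ with the correct multipliers --- whereas the axiom verification, though lengthy, is mechanical.
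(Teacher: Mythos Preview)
Your approach is the same as the paper's: check the GRS axioms by direct computation (the paper simply declares this ``straightforward'') and then exhibit each set as a quotient of a classic root system. The paper is much terser on the second step, deferring entirely to \cite[Section~6.1]{DF23}, where the quotients of the classical root systems are worked out; it records that $DC_n^J$ arises as a quotient of a type-$D$ system and $BC_n^J$ as a quotient of a type-$B$ system.

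Your concrete quotient constructions in parts (2) and (3) need correction, however. The root system $C_m$ has no short roots $\pm e_i$---its roots are $\pm e_i\pm e_j$ and the long roots $\pm 2e_i$---so your phrase ``coordinates where the short roots were killed'' does not parse, and projecting $C_m$ onto a coordinate subspace yields $BC_n^{[n]}$, not $DC_n^J$ (every surviving coordinate still carries its long root $2e_i$). A construction that does work takes $D_{n+|J|}$ (respectively $B_{n+|J|}$ for part (3)): for each $j\in J$ introduce a duplicate coordinate $j'$, set $Y=\langle e_j-e_{j'}:j\in J\rangle$, and project onto $Y^\perp$; the image of $e_j+e_{j'}$ then supplies the doubled root for $j\in J$ only, and after a diagonal rescaling one obtains exactly $DC_n^J$ (respectively $BC_n^J$). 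You were right to flag this as the delicate step.
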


\begin{proof} It is straightforward to check
that $R$ is a \GRS and to determine $\WG(R)$
in all cases. The only assertions that are less
straightforward are the claims that the \GRS
$DC_n^J$ and $BC_n^J$ are equivalent to quotients
of classic root systems. 

The quotients of the classic root systems of type 
$A_l,B_l,C_l$ and $D_l$ are described in Section 6.1
of \cite{DF23}. Using this one can verify that
the \GRS $DC_n^J$ is equivalent to a quotient of 
the classic root system $D_n$ and that the \GRS $BC_n^J$
is equivalent to a quotient of the classic root system
$B_n$. 
\end{proof}

\begin{Lemma} \label{serieslemma2}
Let $(R,V)$ be a \GRS of rank greater than two and assume that its \redGRS $\WG(R)$ is
$A_{n-1}$, $B_n$ or $DC_n^J$ for some $J$. Assume that $\alpha \in \WG(R)$ is such that $2\alpha \in R$.
Then $\WG(R)$ is the \redGRS $B_n$ and $\alpha = \varepsilon b_i$ for some $\varepsilon \in
\{ 1,-1 \}$ and some $1 \leq i \leq n$.

In particular, if the \redGRS $\WG(R)$ is not $B_n$, then $\WG(R) = R \setminus \{ 0 \}$.

\end{Lemma}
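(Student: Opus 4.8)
The plan is to use the structure of rank-two parabolic subsystems to force any root $\alpha$ with $2\alpha \in R$ into a very rigid position. Suppose $\alpha \in \WG(R)$ is primitive with $2\alpha \in R$. By the base-extension lemma, $\alpha$ lies in a base $S$ of $(R,V)$; but more usefully, since $\WG(R)$ is a root set of a crystallographic arrangement of one of the listed series types, I want to locate $\alpha$ inside a rank-two parabolic $P$ with $\alpha \in \WG(P)$ and $2\alpha \in R$. First I would verify that in $A_{n-1}$ and in $DC_n^J$ (for any $J$), every primitive root $\alpha$ is contained in a rank-two parabolic subsystem $P$ with $|\WG(P)| = 6$ — concretely, in $A_{n-1}$ any root $b_i - b_j$ sits in an $A_2$ spanned by three indices, and in $DC_n^J$ any root of the form $\varepsilon b_i + \varepsilon' b_j$ (resp. $2\varepsilon b_i$) sits in a rank-two parabolic whose reduced root set is of type $A_2$ (the $A_2$ obtained from $\{b_i - b_j, b_j - b_k, b_i - b_k\}$-type configurations, or a suitable conjugate). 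Then Lemma \ref{parabolic3crit} forces the multiplier of $\alpha$ to be $1$, contradicting $2\alpha \in R$. This rules out $\WG(R) \in \{A_{n-1}\} \cup \{DC_n^J : J \subseteq [n]\}$ entirely, and in particular $C_n = DC_n^{[n]}$ and $D_n = DC_n^{\emptyset}$.

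So the only remaining possibility is $\WG(R) = B_n$. Now I must pin down which roots $\alpha \in B_n$ can have $2\alpha \in R$. The roots of $B_n$ are the short roots $\pm b_i$ and the long roots $\pm b_i \pm b_j$. For a long root $\alpha = \varepsilon b_i + \varepsilon' b_j$, I would again exhibit a rank-two parabolic subsystem $P$ of $(R,V)$ containing $\alpha$ whose reduced root set is of type $A_2$ (six elements): inside $B_n$ with $n \geq 3$, the roots $\{\varepsilon b_i + \varepsilon' b_j,\ -\varepsilon' b_j + \varepsilon'' b_k,\ \varepsilon b_i + \varepsilon'' b_k\}$ span such an $A_2$, using the third index $k$ available because the rank is greater than two. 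Lemma \ref{parabolic3crit} then gives multiplier $1$ for every long root, so $2\alpha \notin R$ when $\alpha$ is long. Hence the only roots that can be doubled are the short roots $\pm b_i$, which is exactly the claimed conclusion.

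Finally, the "in particular" statement: if $\WG(R) \neq B_n$, then by the above no primitive root $\alpha$ has $2\alpha \in R$, so every primitive root has multiplier $1$; since $R$ is the union of the $\mathbb{R}\alpha \cap R$ over primitive $\alpha$ (together with $0$), and each such intersection is $\{-\alpha, 0, \alpha\}$, we get $R = \WG(R) \cup \{0\} = \WG(R) \sqcup \{0\}$, i.e. $\WG(R) = R \setminus \{0\}$.

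The main obstacle I anticipate is the bookkeeping in the first two steps: one must check that in each series member $A_{n-1}$, $B_n$, $DC_n^J$ every primitive root — including the "doubled" roots $2\varepsilon b_i$ present in $DC_n^J$, and both root lengths in $B_n$ — genuinely lies in a rank-two parabolic subsystem of $(R,V)$ (not merely of $\WG(R)$) whose reduced root set has exactly six elements. Since a parabolic subsystem of $(R,V)$ is determined by intersecting $R$ with the span of a subset of roots, and $\WG(P) = \WG(R) \cap \langle P \rangle$ by the parabolic lemma, the rank-two subspaces to use are precisely the coordinate planes $\langle b_i, b_j \rangle$ and their images under the Weyl group; one checks the induced reduced root set on each is $A_2$ and never $B_2$ or $G_2$, and then Lemma \ref{parabolic3crit} applies uniformly. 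The use of $n > 2$ (equivalently rank $> 2$) is essential to always have a "spare" third coordinate index available.
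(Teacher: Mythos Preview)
Your treatment of the roots $\pm b_i \pm b_j$ is correct and matches the paper: picking a third index $k$ and looking at the span of $\alpha$ with $b_i - b_k$ gives a rank-two parabolic with $|\WG(P)| = 6$, so Lemma~\ref{parabolic3crit} forces multiplier $1$. This handles all of $A_{n-1}$, the long roots of $B_n$, and the $D_n$-part of every $DC_n^J$.

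There is, however, a genuine gap in your treatment of the roots $\alpha = \pm 2 b_i$ in $DC_n^J$. You assert that such a root always sits in a rank-two parabolic with six reduced roots, but this fails when $J = [n]$, i.e.\ when $\WG(R) = C_n$. Any two-dimensional subspace containing $2b_i$ and spanned by roots of $C_n$ is either a coordinate plane $\langle b_i, b_j \rangle$, whose intersection with $C_n$ is $\{\pm 2b_i, \pm 2b_j, \pm(b_i+b_j), \pm(b_i-b_j)\}$ with eight reduced roots, or a plane of the form $\langle b_i, b_j \pm b_k\rangle$ with $j,k \neq i$, which meets $C_n$ in only four roots (a reducible $A_1 \times A_1$). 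Your suggested ``$A_2$ obtained from $\{b_i-b_j, b_j-b_k, b_i-b_k\}$-type configurations'' does not contain $2b_i$, and the coordinate plane you propose in the last paragraph is $B_2$, not $A_2$, in this case. So Lemma~\ref{parabolic3crit} simply does not apply to $2b_i$ in $C_n$.

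The paper closes exactly this gap with Lemma~\ref{parabolic4crit}. Taking $P = R \cap \langle b_i, b_j \rangle$ in the $C_n$ case, one has $\WG(P) = \Pi \cup -\Pi$ with $\Pi = \{2b_i, 2b_j, b_i+b_j, b_i-b_j\}$, and one checks directly that for $\alpha = 2b_i$ there is no $\gamma \in \WG(P)$ with $\{\alpha, \gamma, \gamma-\alpha, \gamma+\alpha\} = \Pi'$ for some choice of signs giving $\WG(P) = \Pi' \cup -\Pi'$. Lemma~\ref{parabolic4crit} then forces $2\alpha \notin R$. Thus excluding $DC_n^{[n]} = C_n$ genuinely requires the $B_2$-analysis of Lemma~\ref{parabolic4crit}, not only the $A_2$-analysis of Lemma~\ref{parabolic3crit}.
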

\begin{proof} Let $\alpha \in \WG(R)$ be such that $2\alpha \in R$.

Suppose, by contradiction, that there are $1 \leq i \neq j \leq n$ such that $\alpha = b_i-b_j$ or $\alpha = b_i+b_j$.
Choose $1 \leq k \leq n$ such that $i \neq k \neq j$. Then
$\beta := b_i-b_k \in \WG(R) \subseteq R$. Let 
$P$ be the rank 2 parabolic of $R$
that contains
$\alpha$ and $\beta$. Then $|\WG(P)| = 6$. 
By Lemma \ref{parabolic3crit}
it follows that the multiplier of $\alpha$ in $R$ is 1
yielding a contradiction.

Thus, 
$\alpha = \lambda b_i$ for some $1 \leq i \leq n$
and some $\lambda \in \{ 1,-1,2,-2 \}$.

Suppose, again by contradiction, 
that $\alpha = 2b_i$ for some $1 \leq i \leq n$. 
Choose $1 \leq j \leq n$ with $j \neq i$.
Since $2b_i \in \WG(R)$, it follows $\WG(R) = DC_n^J$ for some $J \subseteq [n]$ with $i \in J$.
Therefore $\beta := b_i+b_j \in \WG(R)$. Let $P :=
R \cap \langle \alpha,\beta \rangle$ be the rank 2 parabolic
of $R$ containing $\alpha$ and $\beta$. 
If $2b_j$ is not in $P$,
then $\WG(P)$ contains $6$ roots and therefore the multiplier of $\alpha$
is $1$ by Lemma \ref{parabolic3crit}, yielding a 
contradiction. It follows that  $2b_j \in \WG(P)$ and that $\WG(P) = \Pi \cup -\Pi$
where $\Pi = \{ 2b_i,2b_j,b_i+b_j,b_i-b_j \}$. 
We conclude that there is no root $\gamma \in \WG(P)$
such that $\WG(P) = \Pi' \cup -\Pi'$ with
$\Pi' = \{ \alpha,\gamma, \gamma-\alpha,\alpha+\gamma \}$.
Now, Lemma
\ref{parabolic4crit} implies that $2 \alpha$ is not contained
in $R$ and we obtain a contradiction. 

As the multipliers of $\beta$ and $-\beta$
are equal for each root $\beta \in R$, it follows 
from the above that $\alpha \in \{ b_i,-b_i \}$
for some $1 \leq i \leq n$. As 
$$A_{n-1} \cap \{ b_i,-b_i \} = \emptyset
=DC_n^J \cap \{ b_i,-b_i \}$$ it follows that
$\WG(R) = B_n$.

Suppose that $\WG(R) \neq R \setminus \{ 0 \}$. Then
there exists a root $\beta \in \WG(R)$ such that $2 \beta 
\in R$. Thus, the last assertion is a consequence of
the above.
\end{proof}

\begin{Proposition} \label{propseries}
Let $(R,V)$ be a \GRS of rank greater than two. Then
the following hold.
\begin{enumerate}
    \item If $\WG(R) = A_{n-1}$, then $R = R \cup \{ 0\}$,

    \item if $\WG(R) = B_n$, then $R = BC_n^J \cup \{ 0 \}$ for some
    $J \subseteq [n]$,
    \item if $\WG(R) = DC_n^J$, then $R = R \cup \{ 0\}$.
\end{enumerate}

Moreover, in each case, the \GRS R is equivalent to  a quotient of a classic root system.
\end{Proposition}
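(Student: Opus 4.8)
The plan is to read assertions (1) and (3) straight off Lemma~\ref{serieslemma2} and to spend the real effort on (2). For (1) and (3): the reduced root sets $A_{n-1}$ and $DC_n^J$ are not of the form $B_m$ (they occur as distinct entries in Theorem~\ref{cryarrclas}; concretely, in $DC_n^J$ with $J\ne\emptyset$ the vectors $2b_j$ are primitive while $D_n$ contains no root $\pm b_i$). Hence the last clause of Lemma~\ref{serieslemma2} applies and gives $\WG(R)=R\setminus\{0\}$; since $0\in R$ for every \GRS of positive rank, this means $R=\WG(R)\cup\{0\}$, i.e.\ $R=A_{n-1}\cup\{0\}$ in case (1) and $R=DC_n^J\cup\{0\}$ in case (3). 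By Lemma~\ref{lemseries1}(1), resp.\ Lemma~\ref{lemseries1}(2), $(R,V)$ is then equivalent to a quotient of a classic root system.

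For (2), fix a basis $B=(b_1,\dots,b_n)$ of $V$ with $\WG(R)=B_n$; since $b_1,\dots,b_n\in B_n=\WG(R)$ span $V$ we have $\dim V=n$, so $n\ge 3$. By Lemma~\ref{basiclemma} every nonzero root is a positive integral multiple of a primitive root, and the primitive roots are exactly the elements of $B_n$; hence $R\subseteq\sum_{i=1}^n\mathbb{Z}b_i$. Consequently, a vector $v\in\sum_{i=1}^n\mathbb{Z}b_i$ with $v\notin B_n$ whose coordinates have greatest common divisor $1$ cannot lie in $R$: such a $v$ would be a primitive root lying outside $\WG(R)$. In particular $2b_i\pm b_j\notin R$ for distinct $i,j$. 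Now take $\rho\in R\setminus(B_n\cup\{0\})$, i.e.\ a non-primitive nonzero root. Writing $\rho=M\sigma$ with $\sigma$ primitive and $M\ge 2$ (Lemma~\ref{basiclemma}), we get $2\sigma\in R$, so Lemma~\ref{serieslemma2} forces $\sigma=\varepsilon b_i$ for some $\varepsilon\in\{1,-1\}$ and $i\in[n]$. It remains to see $M=2$, i.e.\ that $2b_i\in R$ implies $3b_i\notin R$. Assume $2b_i\in R$ and pick $j\ne i$; since $b_i+(b_i+b_j)=2b_i+b_j\notin R$ while $b_i-(b_i+b_j)=-b_j\in R$, Definition~\ref{def:GRS} applied to the pair $b_i,\,b_i+b_j$ rules out $(b_i,b_i+b_j)\le 0$ (which would force $2b_i+b_j\in R$), so $(b_i,b_i+b_j)>0$. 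Then $(3b_i,b_i+b_j)=3(b_i,b_i+b_j)>0$, hence $3b_i-(b_i+b_j)=2b_i-b_j\in R$, contradicting the preceding paragraph. Therefore every non-primitive nonzero root equals some $\pm 2b_i$ with $2b_i\in R$, and with $J:=\{i\in[n]\mid 2b_i\in R\}$ we obtain $R\setminus(B_n\cup\{0\})=X_J$, that is $R=B_n\cup X_J\cup\{0\}=BC_n^J\cup\{0\}$. Lemma~\ref{lemseries1}(3) then gives that $(R,V)$ is equivalent to a quotient of a classic root system.

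The closing ``moreover'' is then automatic in all three cases: the \GRS furnished by Lemma~\ref{lemseries1} is built on an orthonormal basis, whereas equivalence of \GRSs imposes no condition on the euclidean form, so the linear isomorphism carrying the basis $B$ of $V$ onto that orthonormal basis is already an equivalence of \GRSs from $(R,V)$ to the one in Lemma~\ref{lemseries1}. The one step that takes genuine care is the bound on the multipliers of the short roots $b_i$ in case (2); the short computation above settles it from Definition~\ref{def:GRS} alone, but an alternative would be to apply Lemma~\ref{parabolic4crit} together with the rank-two classification \cite[Theorem 5.2]{DF23} to the rank-two parabolic $R\cap\langle b_i,b_j\rangle$, whose reduced root set has $8$ elements.
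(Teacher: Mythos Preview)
Your proof is correct and follows the paper's own argument closely: cases (1) and (3) via Lemma~\ref{serieslemma2}, case (2) by first pinning down which primitive roots can carry a multiplier via Lemma~\ref{serieslemma2}, then bounding that multiplier by $2$, and the ``moreover'' via Lemma~\ref{lemseries1}. The one genuine difference is how you obtain the bound $M\le 2$ for the short roots $b_i$ in case (2). The paper simply invokes the rank-two classification of \cite[Section~5.2]{DF23} applied to the parabolic $R\cap\langle b_i,b_j\rangle$, whereas you extract it by hand from Definition~\ref{def:GRS}: you show $(b_i,b_i+b_j)>0$ directly (using that $2b_i+b_j\notin R$ while $-b_j\in R$), and then derive the contradiction $2b_i-b_j\in R$ from a hypothetical $3b_i\in R$. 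Your argument is self-contained and avoids importing the full rank-two list; the paper's route is shorter to state but leans on outside input. You even note the paper's alternative yourself at the end, so you are aware of both options.
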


\begin{proof}
If $\WG(R) = A_{n-1}$ or $\WG(R) = DC_n^J$ for some
$J \subseteq [n]$ it follows from the last assertion
of Lemma \ref{serieslemma2} that $R = \WG(R) \cup \{ 0 \}$.

Suppose that $\WG(R) = B_n$. Let $\alpha \in B_n$ be
of the form $\alpha =b_i+b_j$
or $\alpha = b_i-b_j$ for some $1 \leq i < j \leq n$.
Similar as in the proof
of Lemma \ref{serieslemma2} it follows that the multiplier
of $\alpha$ in R is $1$. Furthermore, using the classification
of irreducible \GRS of rank 2 (Section 5.2 in \cite{DF23}), it follows that the multiplier
of $b_i$ in $R$ is $1$ or $2$. It follows that 
$R = BC_n^J$ where $J$ is the set
of all $i \in [n]$ for which the multiplier of $b_i$ is $2$.

This proves the three first assertions and the 
last one follows from Lemma \ref{lemseries1}
\end{proof}

\section{Sporadic finite Weyl groupoids}

\begin{longtable}{l|c|c|c|l|l}
\cite{CH15} & $|\WG(R)|/2$ & $(|R|-1)/2$ & GRS & reference & \cite{DF23} \\
\hline
(3,1) & 10 & 11 & + & quotient GRS \ref{uqgrs_mult} & $\mathcal{E}_{6,3}^{III}$\\
(3,2) & 10 & 10 & + & quotient GRS \ref{uqgrs} & $\mathcal{E}_{6,3}^{II} \approx \mathcal{E}_{7,3}^{I}$\\
(3,3) & 11 & 12 & + & quotient GRS \ref{uqgrs_mult} & $\mathcal{E} _{7,3}^{II}$\\
(3,4) & 12 & - & - & isotropic elements \ref{isonomult} $(b)$ & $-$\\
(3,5) & 12 & - & - & isotropic elements \ref{isonomult} $(b)$ & $-$\\
(3,6) & 13 & 13 & + & special case, \ref{part_cases} & $\mathcal{E}_{7,3}^{III}$\\
(3,6) & 13 & 14 & + & special case, \ref{part_cases} & $\mathcal{E}_{8,3}^{I}$\\
(3,7) & 13 & 14 & + & quotient GRS \ref{uqgrs_mult} & $\mathcal{E}_{7,3}^{V}$\\
(3,8) & 13 & 16 & + & special case, \ref{part_cases} & $\mathcal{F}_{4,3}^{II} \cong \mathcal{E}_{7,3}^{VI}\cong \mathcal{E}_{8,3}^{VI}$\\
(3,9) & 13 & 13 & + & special case, \ref{part_cases} & $\mathcal{F}_{4,3}^{I}\cong \mathcal{E}_{7,3}^{IV}\cong \mathcal{E}_{8,3}^{VII}$\\
(3,10) & 14 & - & - & isotropic elements \ref{isonomult} $(a)$ & $-$\\
(3,11) & 15 & - & - & isotropic elements \ref{isonomult} $(a)$ & $-$\\
(3,12) & 16 & - & - & isotropic elements \ref{isonomult} $(a)$ & $-$\\
(3,13) & 16 & 17 & + & special case, \ref{part_cases} & $\mathcal{E}_{8,3}^{II}$\\
(3,14) & 17 & 20 & + & quotient GRS \ref{uqgrs_mult} & $\mathcal{E}_{8,3}^{IV}$\\
(3,15) & 17 & 18 & + & quotient GRS \ref{uqgrs_mult} & $\mathcal{E}_{8,3}^{III}$\\
(3,16) & 17 & - & - & isotropic elements \ref{isonomult} $(a)$ & $-$\\
(3,17) & 18 & - & - & isotropic elements \ref{isonomult} $(a)$ & $-$\\
(3,18) & 18 & - & - & isotropic elements \ref{isonomult} $(a)$ & $-$\\
(3,19) & 19 & - & - & isotropic elements \ref{isonomult} $(a)$ & $-$\\
(3,20) & 19 & 21 & + & special case, \ref{part_cases} & $\mathcal{E}_{8,3}^{VIII}$\\
(3,21) & 19 & - & - & isotropic elements \ref{isonomult} $(a)$ & $-$\\
(3,22) & 19 & - & - & isotropic elements \ref{isonomult} $(a)$ & $-$\\
(3,23) & 19 & 23 & + & quotient GRS \ref{uqgrs_mult} & $\mathcal{E}_{8,3}^{V}$\\
(3,24) & 20 & - & - & isotropic elements \ref{isonomult} $(a)$ & $-$\\
(3,25) & 20 & - & - & isotropic elements \ref{isonomult} $(a)$ & $-$\\
(3,26) & 20 & - & - & isotropic elements \ref{isonomult} $(a)$ & $-$\\
(3,27) & 21 & - & - & isotropic elements \ref{isonomult} $(a)$ & $-$\\
(3,28) & 21 & - & - & isotropic elements \ref{isonomult} $(a)$ & $-$\\
(3,29) & 21 & - & - & isotropic elements \ref{isonomult} $(a)$ & $-$\\
(3,30) & 22 & - & - & isotropic elements \ref{isonomult} $(a)$ & $-$\\
(3,31) & 25 & - & - & isotropic elements \ref{isonomult} $(a)$ & $-$\\
(3,32) & 25 & - & - & isotropic elements \ref{isonomult} $(a)$ & $-$\\
(3,33) & 25 & - & - & isotropic elements \ref{isonomult} $(a)$ & $-$\\
(3,34) & 25 & - & - & isotropic elements \ref{isonomult} $(a)$ & $-$\\
(3,35) & 26 & - & - & isotropic elements \ref{isonomult} $(a)$ & $-$\\
(3,36) & 26 & - & - & isotropic elements \ref{isonomult} $(a)$ & $-$\\
(3,37) & 27 & - & - & isotropic elements \ref{isonomult} $(a)$ & $-$\\
(3,38) & 27 & - & - & isotropic elements \ref{isonomult} $(a)$ & $-$\\
(3,39) & 27 & - & - & isotropic elements \ref{isonomult} $(a)$ & $-$\\
(3,40) & 28 & - & - & isotropic elements \ref{isonomult} $(a)$ & $-$\\
(3,41) & 28 & - & - & isotropic elements \ref{isonomult} $(a)$ & $-$\\
(3,42) & 28 & - & - & isotropic elements \ref{isonomult} $(a)$ & $-$\\
(3,43) & 29 & - & - & isotropic elements \ref{isonomult} $(a)$ & $-$\\
(3,44) & 29 & - & - & isotropic elements \ref{isonomult} $(a)$ & $-$\\
(3,45) & 29 & - & - & isotropic elements \ref{isonomult} $(a)$ & $-$\\
(3,46) & 30 & - & - & isotropic elements \ref{isonomult} $(a)$ & $-$\\
(3,47) & 31 & - & - & isotropic elements \ref{isonomult} $(a)$ & $-$\\
(3,48) & 31 & - & - & isotropic elements \ref{isonomult} $(a)$ & $-$\\
(3,49) & 34 & - & - & isotropic elements \ref{isonomult} $(a)$ & $-$\\
(3,50) & 37 & - & - & isotropic elements \ref{isonomult} $(a)$ & $-$\\
(4,1) & 15 & 15 & + & quotient GRS \ref{uqgrs} & $\mathcal{E}_{6,4}^{I}$\\
(4,2) & 17 & 17 & + & quotient GRS \ref{uqgrs} & $\mathcal{E}_{6,4}^{II}$\\
(4,3) & 18 & 18 & + & quotient GRS \ref{uqgrs} & $\mathcal{E}_{7,4}^{I}$\\
(4,4) & 21 & 21 & + & quotient GRS \ref{uqgrs} & $\mathcal{E}_{7,4}^{II}$\\
(4,5) & 22 & 23 & + & quotient GRS \ref{uqgrs_mult} & $\mathcal{E}_{7,4}^{III}$\\
(4,6) & 24 & 24 & + & quotient GRS \ref{uqgrs} & $\mathcal{F}_{4,4} \cong \mathcal{E}_{7,4}^{IV} \cong \mathcal{E}_{8,4}^{VI}$\\
(4,7) & 25 & 25 & + & quotient GRS \ref{uqgrs} & $\mathcal{E}_{8,4}^{I}$\\
(4,8) & 28 & 29 & + & quotient GRS \ref{uqgrs_mult} & $\mathcal{E}_{8,4}^{II}$\\
(4,9) & 30 & 30 & + & quotient GRS \ref{uqgrs} & $\mathcal{E}_{8,4}^{III}$\\
(4,10) & 32 & 33 & + & quotient GRS \ref{uqgrs_mult} & $\mathcal{E}_{8,4}^{IV}$\\
(4,11) & 32 & 36 & + & quotient GRS \ref{uqgrs_mult} & $\mathcal{E}_{8,4}^{V}$\\
(5,1) & 25 & 25 & + & quotient GRS \ref{uqgrs} & $\mathcal{E}_{6,5}$\\
(5,2) & 30 & 30 & + & quotient GRS \ref{uqgrs} & $\mathcal{E}_{7,5}^{I}$\\
(5,3) & 33 & 33 & + & quotient GRS \ref{uqgrs} & $\mathcal{E}_{7,5}^{II}$\\
(5,4) & 41 & 41 & + & quotient GRS \ref{uqgrs} & $\mathcal{E}_{8,5}^{I}$\\
(5,5) & 46 & 46 & + & quotient GRS \ref{uqgrs} & $\mathcal{E}_{8,5}^{II}$\\
(5,6) & 49 & 50 & + & quotient GRS \ref{uqgrs_mult} & $\mathcal{E}_{8,5}^{III}$\\
(6,1) & 36 & 36 & + & quotient GRS \ref{uqgrs} & $\mathcal{E}_{6,6}$\\
(6,2) & 46 & 46 & + & quotient GRS \ref{uqgrs} & $\mathcal{E}_{7,6}$\\
(6,3) & 63 & 63 & + & quotient GRS \ref{uqgrs} & $\mathcal{E}_{8,6}^{I}$\\
(6,4) & 68 & 68 & + & quotient GRS \ref{uqgrs} & $\mathcal{E}_{8,6}^{II}$\\
(7,1) & 63 & 63 & + & quotient GRS \ref{uqgrs} & $\mathcal{E}_{7,7}$\\
(7,2) & 91 & 91 & + & quotient GRS \ref{uqgrs} & $\mathcal{E}_{8,7}$\\
(8,1) & 120 & 120 & + & quotient GRS \ref{uqgrs} & $\mathcal{E}_{8,8}$\\
\caption{Overview for the sporadic finite Weyl groupoids\label{fig:overview}}
\end{longtable}

\subsection{Overview}
We use the notation, labels, and the root sets as listed in \cite{CH15}.
There are $74$ sporadic finite Weyl groupoids;
we write $(r,i)$ for the Weyl groupoid of rank $r$ with label $i$.

In Table \ref{fig:overview} we first give an overview of the different cases that can occur.
The entry ``isotropic elements'' means that the axioms of a \GRS would imply the existence of $\alpha\ne 0$ with $(\alpha,\alpha)=0$ on the elements of this \redGRS; in this case there is no corresponding \GRS.
Otherwise, there exist \GRSs. Note that
the only sporadic Weyl groupoid which does not uniquely determine a \GRS is $(3,6)$. This appears as a restriction of the root systems of types $E_7$ and $E_8$.

Detailed information about quotients of exceptional
classic root systems is provided in Subsection 6.3 and Table I of \cite{DF23}.
In the last column we reproduce some of this information
and identify the generalized root systems with the labels given in \cite{DF23}. Almost all of them are uniquely determined by the numbers of roots.
To distinguish $(3,6)$ and $(3,9)$ we use the fact that $(3,9)$ is a restriction of the Weyl arrangement of type $F_4$ (which is not the case for $(3,6)$).
Similarly, in contrast to $(3,6)$, $(3,7)$ is not a restriction of the arrangement of type $E_8$.

\subsection{Uniquely determined \GRS}\label{uqgrs}
For the Weyl groupoids with labels
$$
(3,2),(4,1),(4,2),(4,3),(4,4),(4,6),(4,7),(4,9),(5,1),(5,2),
$$
$$(5,3),(5,4),(5,5),(6,1),(6,2),(6,3),(6,4),(7,1),(7,2),(8,1),
$$
every root is contained in a parabolic subgroupoid of rank two with $6$ roots. Thus in a \GRS $(R,V)$ whose \redGRS is one of those, every root has multiplier $1$ by Lemma \ref{parabolic3crit}. Since the crystallographic arrangements of these root systems are restrictions of Weyl arrangements, the corresponding unique \GRSs are quotient root systems.
Note that these include the Weyl groups of types $F_4=(4,6)$, $E_6=(6,1)$, $E_7=(7,1)$, $E_8=(8,1)$.
\medskip

In the remaining cases, there are several situations that can occur. For each situation, we first explain an example and then list which cases can be treated in an analogous way.

\subsection{Weyl groupoids implying isotropic elements}\label{isonomult}\ \\
$(a)$
Consider as an example the Weyl groupoid of rank $3$ with label $10$. It has an object with positive roots:
$$(0,0,1),(0,1,0),(0,1,1),(0,1,2),(0,1,3),(1,0,0),(1,0,1),$$
$$(1,0,2),(1,1,1),(1,1,2),(1,1,3),(1,1,4),(1,2,3),(1,2,4).$$
Now consider the root $\alpha=(0,1,3)$. For $\beta=(1,0,2)$ we have $\alpha+\beta\notin R$ and $\alpha-\beta\notin R$.
Thus $(\alpha,\beta)=0$ by the third axiom of a \GRS (see Remark \ref{scalar0}). Similarly, $\alpha$ has to be orthogonal to $(1,1,2)$ and $(1,2,3)$. But then $\alpha$ is orthogonal to $\langle (1,0,2),(1,1,2),(1,2,3)\rangle = V$, and in particular $(\alpha,\alpha)=0$, a contradiction. Thus, there is no
\GRS such that its \redGRS is equivalent to this Weyl groupoid.\\
Table \ref{tab:isonomult} contains a list of all other Weyl groupoids which may be discarded with the same argument. The coordinates of the roots are those of the root sets displayed as representatives in \cite{CH15}.

\begin{table}
\centering
\begin{tabular}{c|c|c|l}
rank & label & $\alpha$ & in $\alpha^\perp$ by Rem.\ \ref{scalar0} \\
\hline
3 & 10 & (0,1,3) & (1,0,2),(1,1,2),(1,2,3) \\
3 & 11 & (1,0,0) & (0,1,0),(1,1,4),(1,2,3) \\
3 & 12 & (1,0,0) & (0,1,0),(1,1,4),(1,2,5) \\
3 & 16 & (0,1,0) & (1,0,0),(1,2,4),(2,3,5) \\
3 & 17 & (1,0,0) & (0,1,0),(1,1,4),(1,2,5) \\
3 & 18 & (0,1,3) & (1,0,1),(1,1,2),(1,2,2) \\
3 & 19 & (0,1,0) & (1,0,0),(1,1,2),(1,2,5) \\
3 & 21 & (0,1,3) & (0,2,1),(1,0,1),(1,1,2) \\
3 & 22 & (0,1,3) & (1,0,1),(1,1,2),(1,2,2) \\
3 & 24 & (0,1,0) & (0,2,5),(1,0,0),(1,1,2) \\
3 & 25 & (0,1,0) & (1,0,0),(1,1,2),(1,2,5) \\
3 & 26 & (0,1,3) & (1,0,2),(1,2,3),(1,3,4) \\
3 & 27 & (0,1,0) & (0,2,5),(1,0,0),(1,1,2) \\
3 & 28 & (1,0,0) & (0,1,0),(1,1,4),(1,2,5) \\
3 & 29 & (0,1,0) & (1,0,0),(1,3,6),(2,2,5) \\
3 & 30 & (0,1,3) & (1,0,2),(1,2,3),(1,3,4) \\
3 & 31 & (0,1,0) & (0,2,5),(1,0,0),(1,3,8) \\
3 & 32 & (0,1,0) & (1,0,0),(1,3,8),(2,3,7)
\end{tabular}
\quad
\begin{tabular}{c|c|c|l}
rank & label & $\alpha$ & in $\alpha^\perp$ by Rem.\ \ref{scalar0} \\
\hline
3 & 33 & (0,1,0) & (1,0,0),(1,3,5),(2,2,3) \\
3 & 34 & (0,1,3) & (1,0,2),(1,3,4),(2,2,5) \\
3 & 35 & (0,1,0) & (0,2,5),(1,0,0),(1,3,8) \\
3 & 36 & (0,1,0) & (1,0,0),(1,3,8),(2,3,7) \\
3 & 37 & (0,1,0) & (0,2,5),(1,0,0),(1,3,8) \\
3 & 38 & (0,1,0) & (1,0,0),(1,3,8),(2,2,5) \\
3 & 39 & (0,1,0) & (1,0,0),(1,3,8),(2,3,9) \\
3 & 40 & (0,1,0) & (0,2,5),(1,0,0),(1,3,8) \\
3 & 41 & (0,1,0) & (1,0,0),(1,3,8),(2,2,5) \\
3 & 42 & (0,1,0) & (1,0,0),(1,3,8),(2,2,5) \\
3 & 43 & (0,1,0) & (0,2,5),(1,0,0),(1,3,8) \\
3 & 44 & (0,1,0) & (0,2,5),(1,0,0),(1,3,8) \\
3 & 45 & (0,1,0) & (0,2,5),(1,0,0),(1,3,8) \\
3 & 46 & (0,1,0) & (0,2,5),(1,0,0),(1,3,8) \\
3 & 47 & (0,1,0) & (1,0,0),(1,1,3),(1,2,8) \\
3 & 48 & (0,1,0) & (0,2,5),(1,0,0),(1,3,8) \\
3 & 49 & (0,1,0) & (0,2,5),(1,0,0),(1,4,9) \\
3 & 50 & (0,1,0) & (0,2,5),(1,0,0),(2,3,7)
\end{tabular}
\bigskip
\caption{\ref{isonomult} (a), all cases.}
\label{tab:isonomult}
\end{table}

\medskip
\noindent
$(b)$
There are two more Weyl groupoids for which this argument applies, those of rank three with labels $4$ and $5$. However here we have to include information on the multipliers.
\medskip

\noindent
{\bf (3,4)} 
The Weyl groupoid of rank $3$ with label $4$ has an object with positive roots:
$$(0,0,1),(0,1,0),(0,1,1),(0,1,2),(0,1,3),(1,0,0),(1,0,1),(1,1,1),(1,1,2),(1,1,3),(1,2,3),(1,2,4).$$
Assume that this is the Weyl groupoid of a \GRS $R$.
In $R$, all roots except $(1,1,2)$ have multiplier $1$ because they are contained in a parabolic subsystem of rank $2$ with $6$ roots. The root $(1,1,2)$ has multiplier at most $2$ because it is contained in a parabolic subsystem of rank $2$ with $8$ roots.

Now consider the root $\alpha=(0,1,0)$. For $\beta=(0,1,2)$ we have $\alpha+\beta\notin R$ and $\alpha-\beta\notin R$.
Thus $(\alpha,\beta)=0$ by the third axiom of a \GRS. Similarly, $\alpha$ has to be orthogonal to $(0,1,3)$ and $(1,0,0)$. But then $\alpha$ is orthogonal to $\langle (0,1,2),(0,1,3),(1,0,0)\rangle = V$, and in particular $(\alpha,\alpha)=0$, a contradiction. Thus this Weyl groupoid has no corresponding \GRS.
\medskip

\noindent
{\bf (3,5)} The Weyl groupoid of rank $3$ with label $5$ has an object with positive roots:
$$(0,0,1),(0,1,0),(0,1,1),(0,1,2),(1,0,0),(1,0,1),(1,0,2),(1,1,1),(1,1,2),(1,1,3),(1,2,2),(1,2,3).$$
Assume that this is the Weyl groupoid of a \GRS $R$.
The argument in \ref{isonomult} $(a)$ does not work here, because the primitive roots do not produce a contradiction. However, we see that
no root has a multiplier greater than $1$ because all roots are contained in a parabolic subsystem of rank $2$ with $6$ roots.
Hence $R$ cannot be a \GRS because the root $(1,0,0)$ would be orthogonal to $\langle (0,1,0),(1,0,2),(1,1,3)\rangle = V$.

\subsection{Weyl groupoids with unique \GRS}\label{uqgrs_mult}
The Weyl groupoid of rank $3$ with label $1$ has an object with positive roots:
$$(0,0,1),(0,1,0),(0,1,1),(0,1,2),(1,0,0),(1,0,1),(1,0,2),(1,1,1),(1,1,2),(1,1,3).$$
Assume that this is the Weyl groupoid of a \GRS $R$.
In $R$, all roots except $(0,0,1)$ have multiplier $1$ because they are contained in a parabolic subsystem of rank $2$ with $6$ roots.
The root $(0,0,1)$ has multiplier at most $2$ because it is contained in a parabolic subsystem of rank $2$ with $8$ roots. However, $(0,0,1)$ requires multiplier 2 because otherwise $(0,1,0)$ would be orthogonal to $\langle (0,1,2),(1,0,0),(1,1,3)\rangle$ which is the whole space $V$.
Hence $R$ is uniquely determined and is a quotient \GRS.\\
A similar argument applies for the Weyl groupoids with labels
$$
(3,1),(3,3),(3,7),(3,14),(3,15),(3,23),(4,5),(4,8),(4,10),(4,11),(5,6);
$$
for these we obtain $1, 1, 1, 3, 1, 4, 1, 1, 1, 4, 1$ positive roots with multiplier $2$ respectively.

\subsection{Particular cases}\label{part_cases}
\medskip
{\bf (3,6)} The Weyl groupoid of rank $3$ with label $6$ has an object with positive roots:
$$(0,0,1),(0,1,0),(0,1,1),(0,1,2),(0,1,3),(0,2,3),(1,0,0),$$
$$(1,0,1),(1,1,1),(1,1,2),(1,1,3),(1,2,3),(1,2,4).$$
Assume that this is the Weyl groupoid of a \GRS $R$.
In $R$, all roots except $(1,1,2)$ have multiplier $1$ because they are contained in a parabolic subsystem of rank $2$ with $6$ roots. The root $(1,1,2)$ has multiplier at most $2$ because it is contained in a parabolic subsystem of rank $2$ with $8$ roots.
There are 2 possible choices of sets of multipliers for the roots.
If all multipliers are $1$, then
$$\begin{pmatrix}2 & 0 & -1/2 \\
0 & 3 & -3/2 \\
-1/2 & -3/2 & 1 
\end{pmatrix}$$
defines a bilinear form with respect to which $R$ is a \GRS.
If all multipliers are $1$ except for the root $(1,1,2)$ which has multiplier $2$, then
$$\begin{pmatrix}9/8 & 0 & -1/3 \\
0 & 2 & -1 \\
-1/3 & -1 & 2/3 
\end{pmatrix}$$
defines a bilinear form with respect to which $R$ is a \GRS.
Thus we obtain two different equivalence classes of \GRSs for this Weyl groupoid.

\medskip

\noindent
{\bf (3,8)} The Weyl groupoid of rank $3$ with label $8$ has an object with positive roots:
$$(0,0,1),(0,1,0),(0,1,1),(0,1,2),(1,0,0),(1,0,1),(1,0,2),$$
$$(1,1,1),(1,1,2),(1,1,3),(1,2,2),(1,2,3),(1,2,4).$$
Assume that this is the Weyl groupoid of a \GRS $R$.
In $R$, the roots $$(0,1,0),(0,1,2),(1,0,1),(1,1,1),(1,1,3),(1,2,3)$$ have multiplier $1$ because they are contained in a parabolic subsystem of rank $2$ with $6$ roots.
The roots $(0,0,1),(0,1,1),(1,0,0),(1,0,2),(1,1,2),(1,2,2),(1,2,4)$ have multiplier at most $2$ because they are contained in a parabolic subsystem of rank $2$ with $8$ roots.
The root $(0,0,1)$ requires multiplier 2 because otherwise $(1,0,0)$ would be orthogonal to $\langle (0,1,0),(1,0,2),(1,1,3)\rangle$ which is the whole space $V$.
The root $(0,1,1)$ requires multiplier 2 because otherwise $(1,0,0)$ would be orthogonal to $\langle (0,1,0),(1,1,3),(1,2,2)\rangle=V$.
The root $(1,1,2)$ requires multiplier 2 because otherwise $(1,0,0)$ would be orthogonal to $\langle (0,1,0),(1,1,3),(1,2,4)\rangle=V$.
There are 16 possible choices of sets of multipliers for the roots.
If the multipliers are $(2,1,2,1,1,1,1,1,2,1,1,1,1)$ (for the above ordering of the roots), then
$$\begin{pmatrix}3 & 0 & -1 \\
0 & 2 & -1 \\
-1 & -1 & 1 
\end{pmatrix}$$
defines a bilinear form with respect to which $R$ is a \GRS.
In all other cases, the axioms of a \GRS would produce non-trivial isotropic elements.
Thus we obtain one equivalence class of \GRSs for this Weyl groupoid.

\begin{Remark} By Proposition 6.3(iv) in \cite{DF23} one has
$$\mathcal{E}_{7,3}^{VI} \cong \mathcal{E}_{8,3}^{VI} \cong
\mathcal{F}_{4,3}^{II}$$ and the \GRS obtained above is
equivalent to these quotients.
\end{Remark}

\medskip

\noindent
{\bf (3,9)} The Weyl groupoid of rank $3$ with label $9$ has an object with positive roots:
$$(0,0,1),(0,1,0),(0,1,1),(0,1,2),(1,0,0),(1,0,1),(1,1,1),$$
$$(1,1,2),(2,0,1),(2,1,1),(2,1,2),(2,1,3),(2,2,3).$$
Assume that this is the Weyl groupoid of a \GRS $R$.
In $R$, the roots $$(0,1,0),(0,1,2),(2,0,1),(2,1,1),(2,1,3),(2,2,3)$$ have multiplier $1$ because they are contained in a parabolic subsystem of rank $2$ with $6$ roots.
In $R$, the roots $(0,0,1),(0,1,1),(1,0,0),(1,0,1),(1,1,1),(1,1,2),(2,1,2)$ have multiplier at most $2$ because they are contained in a parabolic subsystem of rank $2$ with $8$ roots.
Thus every root has multiplier at most $2$.
There are 128 possible choices of sets of multipliers for the roots.
If the multipliers are all equal to $1$, then
$$\begin{pmatrix}3/4 & 0 & -1/2 \\
0 & 2 & -1 \\
-1/2 & -1 & 1 
\end{pmatrix}$$
defines a bilinear form with respect to which $R$ is a \GRS.
In all other cases, the axioms of a \GRS would produce non-trivial isotropic elements.

\medskip

\noindent
{\bf (3,13)} The Weyl groupoid of rank $3$ with label $13$ has an object with positive roots:
$$(0,0,1),(0,1,0),(0,1,1),(0,1,2),(0,1,3),(1,0,0),(1,1,0),(1,1,1),$$
$$(1,1,2),(1,1,3),(1,2,1),(1,2,2),(1,2,3),(1,2,4),(1,3,4),(2,3,4).$$
Assume that this is the Weyl groupoid of a \GRS $R$.
In $R$, the roots $$(0,1,0),(0,1,1),(0,1,3),(1,0,0),(1,1,0),(1,1,1),(1,1,3),(1,2,1),(1,2,3),(1,2,4),(1,3,4),(2,3,4)$$ have multiplier $1$ because they are contained in a parabolic subsystem of rank $2$ with $6$ roots.
In $R$, the roots $(0,1,2),(1,1,2),(1,2,2)$ have multiplier at most $2$ because they are contained in a parabolic subsystem of rank $2$ with $8$ roots.
For the root $(0,0,1)$, we have to consider all the possible multipliers $1,\ldots,4$.
There are thus 32 possible choices of sets of multipliers for the roots.
If the multipliers are $2,1,1,1,1,1,1,1,1,1,1,1,1,1,1,1$, then
$$\begin{pmatrix}20/3 & -10/3 & 0 \\
-10/3 & 4 & -4/3 \\
0 & -4/3 & 1 
\end{pmatrix}$$
defines a bilinear form with respect to which $R$ is a \GRS.
In all other cases, the axioms of a \GRS would produce non-trivial isotropic elements.

\medskip

\noindent
{\bf (3,20)} The Weyl groupoid of rank $3$ with label $20$ has an object with positive roots:
$$(0,0,1),(0,1,0),(0,1,1),(0,1,2),(0,1,3),(0,1,4),(1,0,0),(1,1,0),(1,1,1),$$
$$(1,1,2),(1,1,3),(1,1,4),(1,2,2),(1,2,3),(1,2,4),(1,2,5),(1,2,6),(1,3,6),(2,3,6).$$
Assume that this is the Weyl groupoid of a \GRS $R$.
Except $(0,0,1)$, all roots in $R$ have multiplier $1$ because they are contained in a parabolic subsystem of rank $2$ with $6$ roots.
There are 4 possible choices of multipliers for the root $(0,0,1)$.
If the multiplier is $3$, then
$$\begin{pmatrix}12 & -6 & 0 \\
-6 & 8 & -2 \\
0 & -2 & 1 
\end{pmatrix}$$
defines a bilinear form with respect to which $R$ is a \GRS.
In all other cases, the axioms of a \GRS would produce non-trivial isotropic elements.

\subsection{Conclusion}

The following proposition is a consequence of the results
in this section.

\begin{Proposition} \label{propsporadic}
Let $(R,V)$ be a \GRS of rank at least $3$ such that $\WG(R)$ is the 
set of roots of one the $74$ sporadic Weyl groupoids.
Then $R$ is equivalent to a quotient of a classic root system
of type $E_6,E_7,E_8$ or $F_4$.
\end{Proposition}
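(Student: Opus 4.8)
The plan is to work through the $74$ sporadic finite Weyl groupoids one by one, using the case analyses carried out in Subsections~\ref{isonomult}--\ref{part_cases}, and in each case where a \GRS exists to identify it with one of the quotients of an exceptional classic root system described in Subsection~6.3 and Table~I of \cite{DF23}. Table~\ref{fig:overview} organizes the $74$ groupoids into four groups according to the outcome, and the argument is to confirm each group in turn.

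For the groupoids marked ``isotropic elements'' in Table~\ref{fig:overview} --- namely $(3,4)$, $(3,5)$, and the ones listed in Table~\ref{tab:isonomult} --- Subsection~\ref{isonomult} produces, for a suitable root $\alpha$ of the putative \GRS, three further roots spanning $V$ to each of which $\alpha$ must be orthogonal by Remark~\ref{scalar0} (in the cases $(3,4)$ and $(3,5)$ one first uses Lemma~\ref{parabolic3crit} and Lemma~\ref{parabolic4crit} to bound the multipliers, which is what forces the relevant sums and differences to fall outside $R$). Hence $(\alpha,\alpha)=0$ with $\alpha\ne 0$, a contradiction, so these groupoids carry no \GRS. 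For the groupoids in Subsection~\ref{uqgrs}, every root lies in a rank-$2$ parabolic subsystem whose \redGRS has $6$ elements, so Lemma~\ref{parabolic3crit} forces every multiplier to be $1$; thus $R=\WG(R)\cup\{0\}$ is the unique \GRS with that \redGRS, and since these crystallographic arrangements --- among them the Weyl arrangements of types $F_4,E_6,E_7,E_8$ --- are restrictions of Weyl arrangements, $R$ is a quotient of a classic root system. For the groupoids in Subsection~\ref{uqgrs_mult} and the particular cases $(3,8),(3,9),(3,13),(3,20)$, the same tools pin down the multipliers: most roots lie in a rank-$2$ parabolic with $6$ roots (multiplier $1$ by Lemma~\ref{parabolic3crit}), the remaining ones lie in a rank-$2$ parabolic with $8$ roots (multiplier at most $2$ by Lemma~\ref{parabolic4crit}, together with the constraint on the shape of that parabolic), and the isotropy argument of Subsection~\ref{isonomult} eliminates every multiplier configuration except one, for which an explicit admissible bilinear form is exhibited. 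The sole exception is $(3,6)$, which admits exactly the two inequivalent \GRS given by the two bilinear forms in Subsection~\ref{part_cases}.

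It then remains to match each surviving \GRS with a specific quotient of $E_6,E_7,E_8$ or $F_4$. In every case $|\WG(R)|$ and $|R|$ are now known (columns $2$ and $3$ of Table~\ref{fig:overview}), and, as recorded in Table~I of \cite{DF23}, almost every quotient of an exceptional classic root system is already determined by these two numbers; the identification is therefore immediate except for the coincidences $(3,6)$/$(3,9)$ and $(3,6)$/$(3,7)$ in the counts. These are separated by noting that $(3,9)$ is a restriction of the Weyl arrangement of type $F_4$ whereas $(3,6)$ is not, and that $(3,7)$ is not a restriction of the Weyl arrangement of type $E_8$ whereas $(3,6)$ is; this produces the last column of Table~\ref{fig:overview} and finishes the proof. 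The main obstacle is the finite but genuinely tedious bookkeeping of Subsection~\ref{part_cases}: for each of $(3,6),(3,8),(3,9),(3,13),(3,20)$ one must enumerate all admissible multiplier assignments and verify that every assignment other than the one listed forces a nonzero isotropic vector --- a check for which the explicit root coordinates of \cite{CH15} and the rank-$2$ classification of \cite{DF23} are indispensable.
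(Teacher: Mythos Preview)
Your proposal is correct and follows essentially the same approach as the paper, which states the proposition simply as a consequence of the case analyses in Subsections~\ref{uqgrs}--\ref{part_cases} summarized in Table~\ref{fig:overview}. One minor imprecision: in the particular cases $(3,13)$ and $(3,20)$ the root $(0,0,1)$ is not contained in any rank-$2$ parabolic with only $6$ or $8$ primitive roots, so its multiplier is a priori bounded only by $4$ via the full rank-$2$ classification of \cite{DF23} rather than by Lemma~\ref{parabolic4crit}; this is absorbed into the ``tedious bookkeeping'' you acknowledge and does not affect the overall argument.
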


\providecommand{\bysame}{\leavevmode\hbox to3em{\hrulefill}\thinspace}
\providecommand{\href}[2]{#2}


\begin{thebibliography}{Hum72}

\bibitem[CH15]{CH15}
M.~Cuntz and I.~Heckenberger, \emph{Finite {W}eyl groupoids}, J. Reine Angew.
  Math. \textbf{702} (2015), 77--108.

\bibitem[CL17]{p-CL-15}
M.~Cuntz and S.~Lentner, \emph{A simplicial complex of {N}ichols algebras},
  Math. Z. \textbf{285} (2017), no.~3-4, 647--683.

\bibitem[Cun11]{p-C10}
M.~Cuntz, \emph{Crystallographic arrangements: Weyl groupoids and simplicial
  arrangements}, Bull. London Math. Soc. \textbf{43} (2011), no.~4, 734--744.

\bibitem[Cun21]{C19}
\bysame, \emph{A bound for crystallographic arrangements}, J. Algebra
  \textbf{574} (2021), 50--69.

\bibitem[DF23]{DF23}
I.~Dimitrov and R.~Fioresi, \emph{Generalized root systems}, Preprint (2023),
  40 pp., available at
  \href{http://arxiv.org/abs/2308.06852v2}{\texttt{arXiv:2308.06852v2}}.

\bibitem[Hum72]{Hu72}
J.E. Humphreys, \emph{Introduction to {L}ie algebras and representation
  theory}, Graduate Texts in Mathematics, vol. Vol. 9, Springer-Verlag, New
  York-Berlin, 1972.

\bibitem[OT92]{OT}
P.~Orlik and H.~Terao, \emph{Arrangements of hyperplanes}, Grundlehren der
  Mathematischen Wissenschaften, vol. 300, Springer-Verlag, Berlin, 1992.

\end{thebibliography}
\end{document}